\theoremstyle{theorem}
\newtheorem{theorem}{Theorem}
\newtheorem{corollary}[theorem]{Corollary}
\newtheorem{lemma}[theorem]{Lemma}
\newtheorem{proposition}[theorem]{Proposition}
\theoremstyle{definition}
\newtheorem{example}[theorem]{Example}
\newtheorem{remark}[theorem]{Remark}
\DeclareMathOperator{\cone}{cone}
\DeclareMathOperator{\conv}{conv}
\DeclareMathOperator{\aff}{aff}
\DeclareMathOperator{\Hom}{Hom}
\DeclareMathOperator{\mult}{mult}
\DeclareMathOperator{\Q}{{\mathbb{Q}}}
\DeclareMathOperator{\Z}{{\mathbb{Z}}}
\DeclareMathOperator{\R}{{\mathbb{R}}}
\DeclareMathOperator{\F}{{\mathbb{F}}} 
\DeclareMathOperator{\nib}{nib}
\DeclareMathOperator{\hilb}{Hilb}
\DeclareMathOperator{\height}{ht}
\renewcommand{\L}{\mathcal{L}}
\renewcommand{\P}{\mathbb{P}}
\newcommand{\nbr}{n}
\newcommand{\OO}{\mathcal{O}}
\providecommand\polymake{\texttt{polymake}\ }
\providecommand\normaliz{\texttt{normaliz2}\ }
\providecommand\lattE{\texttt{Latte}\ }
\providecommand\fourtitwo{\texttt{4ti2}\ }
\title[Finitely many smooth $d$-polytopes with $\nbr$ lattice points]{Finitely many smooth
  $\boldsymbol d$-polytopes with $\boldsymbol \nbr$~lattice points}
\author[9 authors]{Tristram Bogart}
\address{Tristram Bogart, Universidad de los Andes, Colombia}
\author[]{Christian Haase}
\address{Christian Haase, Goethe-Universit\"at
  Frankfurt, Germany}
\author[]{Milena Hering}
\address{Milena Hering, University of Edinburgh, UK}
\author[]{Benjamin Lorenz}
\address{Benjamin Lorenz, Goethe-Universit\"at Frankfurt, Germany}
\author[]{Benjamin Nill}
\address{Benjamin Nill, Stockholms Universitet, Stockholm, Sweden}
\author[]{Andreas Paffenholz}
\address{Andreas Paffenholz, Technische Universit\"at Darmstadt, Germany}
\author[]{G\"unter Rote}
\address{G\"unter Rote, Freie Universit\"at Berlin, Germany}
\author[]{Francisco Santos}
\address{Francisco Santos, Universidad de Cantabria, Santander, Spain}
\author[]{Hal Schenck}
\address{Hal Schenck, University of Illinois, Urbana, IL, USA}
\date{July 17, 2013}
\begin{document}

\begin{abstract}
  We prove that for fixed $\nbr$ there are only finitely many 
  embeddings of $\Q$-factorial toric varieties $X$ into $\P^\nbr$ that
  are induced by a complete linear system.  
  The proof is based on a combinatorial result that 
  implies that for fixed nonnegative integers $d$ and $\nbr$, there
  are only finitely many smooth $d$-polytopes with $\nbr$ lattice
  points. We also enumerate all smooth $3$-polytopes with $\le 12$
  lattice points. 
\end{abstract}
 
\maketitle

\section{Introduction}
\label{sec:intro}

The present paper has two target audiences: combinatorialists and
algebraic geometers. We give combinatorial proofs of results motivated
by the algebraic geometry of toric varieties.
We provide two introductions with statements of the main results in
the language of divisors on toric varieties on the one hand, and in
the language of lattice polytopes on the other.
In Section~\ref{preliminaries}, we collect the relevant entries from
the dictionary translating between the two worlds. In
Section~\ref{sec:theorems} we prove our theorems, and in
Section~\ref{sec:classification} we report on first classification
results.

\subsection{Introduction (for algebraic geometers)}
\label{sec:intro:alg}

The purpose of this paper is to show the following finiteness theorem
about embeddings of toric varieties\footnote{All toric
varieties appearing in this paper are normal by construction.}
into projective space of a fixed dimension $\nbr$.

\begin{theorem}\label{thm:Qfactorialembeddings}
Let $\nbr$ be a nonnegative integer. Then there exist only 
finitely many embeddings of $\Q$-factorial
toric varieties into $\P^{\nbr}$ that are induced by a 
complete linear series. 
\end{theorem}

Neither of the two conditions ($\Q$-factorial and embedded via a complete linear series) 
can be omitted in the statement:

\begin{itemize}
\item For complete linear series embeddings of 
non-$\Q$-factorial varieties of dimension at least three, the embedding
dimension does not even bound the degree, see Example~\ref{egBruns}.
\item Every Hirzebruch surface $X(\F_a)$ 
(which is smooth, hence $\Q$-factorial)
admits an embedding into $\P^{5}$; see Example~\ref{Hirzebruch}.
\end{itemize}

Furthermore, there exist infinitely many polarized toric varieties $(X,L)$ where 
$X$ is $\Q$-factorial and $L$ is ample with $h^0(X,L) = 3$, see Example~\ref{singex}.

When we assume that $X$ is smooth we obtain an even stronger result. 
For an ample line bundle $L= \mathcal{O}(D)$ on a toric variety $X$, we let $d(L) = \sum D\cdot C$, where the sum runs over all torus invariant curves $C$ in $X$.   

\begin{theorem}\label{thm:smooth}
For fixed $\nbr$, there are only finitely many smooth polarized toric 
varieties $(X,L)$ such that $d(L) \leq \nbr$. 
\end{theorem}

Example \ref{ex:multnec} shows that this theorem is false for very ample line bundles on  $\Q$-factorial toric surfaces. 

In Section~\ref{sec:classification} we
use Oda's classification of smooth 3-dimensional 
toric varieties that are minimal with respect to equivariant blow-ups
to classify all
embeddings of smooth 3-dimensional toric varieties into $\P^{\le 11}$
using a complete linear series.  In the appendix we present the
complete list of the corresponding 3-polytopes with $\le 12$ lattice
points up to equivalence. 

Our motivation for this classification is a hierarchy of
long standing open questions on toric embeddings, for example Oda's question  \cite{Oda97} whether an ample line bundle on a smooth projective toric variety is normally generated
(see Section~\ref{sec:conjectures}).

\subsection{Introduction (for polyhedral geometers)}
\label{sec:intro:comb}
The purpose of this paper is to show that there is only a finite
number of classes (modulo integral equivalence) 
of smooth lattice
polytopes once we fix some properties of them.
For example, let us call a lattice polytope \emph{smooth} if it is
simple and all its normal cones (equivalently, all its tangent cones)
are unimodular. Then Theorem~\ref{thm:smooth} is equivalent to:

\begin{theorem}
\label{thm:polytopes}
  Let $\nbr$ be a nonnegative integer. Then, modulo integral equivalence, there are only finitely many smooth lattice
  polytopes with $\nbr$ lattice points on their edges.
\end{theorem}

We prove several versions of this theorem; the most general one
(Theorem~\ref{thm:points2}) says that instead of requiring our
polytopes to be smooth, as in the above in
Theorem~\ref{thm:polytopes}, it suffices to fix a finite list of
possible tangent cones for the vertices (modulo integral 
equivalence).

Our proofs are based on a statement that transfers finiteness from
dimension two to dimension $n$ (Lemma~\ref{lem:2dtoalld}), together
with a detailed analysis of the case of dimension two. In dimension two,
simply using Pick's theorem already implies that there is a finite
number of polygons with a fixed number of lattice points (see the proof
of Theorem~\ref{thm:points}), but by using the classification of
2-dimensional unimodular fans we get that it is in fact enough to fix
the number of lattice points on edges, as long as the \emph{multiplicity} of
the tangent cones is also bounded (Theorem~\ref{thm:mainforpolygons},
see Section~\ref{def-multiplicity} for the definition of {multiplicity}). In
the smooth case, we also give bounds on how many polygons there are
and how big their area can be in terms of the number of lattice points
on edges (Theorems~\ref{thm:fewpolygons} and~\ref{thm:smallpolygons}).

In Section~\ref{sec:classification} we use Oda's classification of
3-dimensional unimodular fans with $\le 8$ rays that are minimal with
respect to stellar subdivisions to classify all 3-dimensional
polytopes with unimodular normal fan and $\le 12$ lattice points, up
to equivalence. They are listed in the appendix. In subsequent work,
Anders Lundman has extended this classification to $16$ lattice
points~\cite{LundmanMSc}.

Also from the combinatorial viewpoint, our motivation for this
classification is a hierarchy of long standing open questions about
smooth polytopes (see Section~\ref{sec:conjectures}). 

\subsection{Related Results}

Let us briefly give an overview of related finiteness and
classification results.

The first finiteness theorem goes back to Hensley~\cite{Hensley}, with
the current best bound due to Pikhurko~\cite[{(9)}]{Pikhurko}.

\begin{theorem} 
  \label{thm:Hensley}
  For a positive integer $d$, there is a bound $V(d)$ so that the
  volume of every lattice $d$-polytope with $k \ge 1$ interior lattice
  points is bounded by $k \cdot V(d)$.
\end{theorem}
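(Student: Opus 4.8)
The plan is to prove the statement in two stages: first establish the special case $k=1$, then reduce the general case to it in a way that keeps the bound linear in $k$. Concretely, define $V(d)$ to be the supremum of the normalized volumes of lattice $d$-polytopes having exactly one interior lattice point (we work throughout with normalized volume, which differs from Euclidean volume only by the universal factor $d!$). The heart of the argument is to show this supremum is finite; the full statement then follows from the reduction below.

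\emph{Stage 1 (the case $k=1$).} Let $P$ be a lattice $d$-polytope whose only interior lattice point is, after translation, the origin $0$. Decompose $P$ into the pyramids $\conv(0,F)$ as $F$ ranges over the facets of $P$. If $F$ lies in the supporting hyperplane $\{x : \langle a_F, x\rangle = b_F\}$ with $a_F \in \Z^d$ primitive, then $b_F$ is a positive integer, namely the lattice distance from $0$ to $F$, and the cone decomposition gives $\vol(P) = \sum_F b_F \cdot \vol(F)$, where $\vol(F)$ is the $(d-1)$-dimensional normalized volume of the facet. Thus it suffices to bound, uniformly in $d$, both the lattice heights $b_F$ and the facet volumes $\vol(F)$. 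The key geometric observation driving both bounds is that $\conv(0,F)$ contains no lattice point other than $0$ and the lattice points lying on its base $F$: any lattice point in the relative interior of a slice $P \cap \{\langle a_F, \cdot\rangle = j\}$ with $0 < j < b_F$ would be an interior lattice point of $P$ distinct from $0$, which is excluded. This ``hollowness'' of all intermediate integer slices is a strong constraint.

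\emph{Stage 1, continued, via induction on $d$.} I would bound $\vol(F)$ by appealing to the $(d-1)$-dimensional finiteness (the inductive hypothesis applied to the lower-dimensional lattice structure carried by $F$), and I would bound the height $b_F$ by exploiting the emptiness of the intermediate slices: if $b_F$ were large, the stacked empty cross-sections between $0$ and $F$ would force an interior lattice point to appear, contradicting minimality. Combining these two estimates closes the induction and yields a finite (and doubly-exponential in $d$) bound $V(d)$, with the base case $d=1$ immediate.

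\emph{Stage 2 (reduction to $k=1$).} Given $P$ with interior lattice points $p_1,\dots,p_k$, the idea is to cover $P$ by $k$ regions, one attached to each $p_i$, each contained in a translate of a lattice polytope having $p_i$ as its \emph{unique} interior lattice point, and with the regions' volumes summing to at least $\vol(P)$. Applying the Stage~1 bound to each piece and summing gives $\vol(P) \le k\cdot V(d)$. The hard part will be the quantitative core of Stage~1, namely controlling the heights $b_F$ and the facet volumes \emph{simultaneously} so the induction closes; bounding the $b_F$ is the delicate point, since the emptiness of each intermediate slice is easy but translating it into a bound on \emph{how many} such slices can occur is precisely where the rapid growth of $V(d)$ enters, and where the sharpest known estimates (Pikhurko) improve on Hensley's original argument. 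The covering in Stage~2 must also be arranged so that no more than $k$ pieces are used, keeping the final dependence on $k$ linear.
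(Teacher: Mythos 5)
The paper does not prove this statement at all: it is imported from the literature (Hensley's theorem \cite{Hensley}, with the current best bound due to Pikhurko \cite{Pikhurko}), so there is no internal proof to compare against, and your proposal has to stand on its own. It does not. It is a plan whose two load-bearing steps are exactly the hard content of Hensley's theorem, and one of them is broken as stated. The broken one is the induction: to bound $\vol(F)$ for a facet $F$ you invoke the $(d-1)$-dimensional statement ``applied to the lattice structure carried by $F$'', but that statement only concerns polytopes with \emph{at least one} interior lattice point, and facets of $P$ can perfectly well be hollow. Hollow lattice polytopes have unbounded volume --- the paper's own Example~\ref{singex} (Reeve's simplices: $4$ lattice points, none interior, arbitrarily large volume) is exactly such a family --- so the inductive hypothesis gives nothing for such facets, and the decomposition $\vol(P)=\sum_F b_F\,\vol(F)$ with separately bounded factors does not close.

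The other step, bounding the heights $b_F$, is asserted rather than argued: ``if $b_F$ were large, the stacked empty cross-sections would force an interior lattice point to appear'' is a flatness-type statement, and it is where all the difficulty of the theorem lives. Your hollowness observation (no lattice point in the relative interior of an intermediate slice, since such a point would be interior to $P$) is correct, but by itself it is fully compatible with arbitrarily large $b_F$: stacks of hollow slices of unbounded height exist (again Reeve), and what must be shown is that the interior point sitting below them forbids this --- a fact whose known proofs need genuine machinery (Hensley's Diophantine approximation argument, or reduction to simplices plus Sylvester-type unit-fraction estimates), and whose conclusion is necessarily doubly exponential in $d$, as the Zaks--Perles--Wills simplices show; no one-line slicing argument can produce it. Declaring this ``the delicate point'' does not discharge it. Stage 2 has the same character: the covering of $P$ by $k$ regions, each contained in a lattice polytope with a unique interior lattice point, is wished for, not constructed, and no reason is given that it exists. (Minor: your claim that $\conv(0,F)$ contains no lattice points besides $0$ and those of $F$ is false --- its lateral boundary may contain lattice points of $\partial P$; only the interior version is right.) In sum, the proposal is a roadmap in which every essential step is either missing or, in the case of the facet-volume induction, incorrect.
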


The second result, due to Lagarias and
Ziegler~\cite[Theorem~2]{lagariasZiegler}, implies that bounding the
volume automatically bounds the number of lattice points.

\begin{theorem}
  \label{thm:LZ}
  A family of lattice $d$-polytopes with bounded volume contains only
  a finite number of integral equivalence classes.
\end{theorem}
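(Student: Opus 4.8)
The plan is to reduce Theorem~\ref{thm:LZ} to the following bounding claim: every lattice $d$-polytope $P$ of normalized volume at most $V$ is integrally equivalent to a polytope whose vertices lie in one fixed finite subset of $\frac1m\Z^d$ (for some $m\le V$). Granting this, finiteness is immediate: a lattice polytope is the convex hull of its vertices, which are lattice points, and there are only finitely many subsets of a finite set. So the entire problem becomes one of choosing the right integral coordinates in which $P$ provably has small entries.

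The key idea, and the step I would set up first, is to pick a \emph{coordinate frame adapted to $P$}. Triangulate $P$ into lattice simplices whose vertices are vertices of $P$; each maximal simplex has normalized volume at least $1$ and they partition $P$, so there is a full-dimensional lattice simplex $\Delta=\conv(v_0,\dots,v_d)\subseteq P$ whose normalized volume $m$ satisfies $1\le m\le V$. After an integral affine transformation I may take $v_0=0$ and use $v_1,\dots,v_d$ as a real basis. In these coordinates $\Delta$ becomes the standard simplex, and the original lattice $\Z^d$ becomes an overlattice $\Lambda$ with $[\Lambda:\Z^d]=m$, so that $\Z^d\subseteq\Lambda\subseteq\frac1m\Z^d$; in particular every lattice point has coordinates that are multiples of $1/m$.

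The heart of the argument is that this single choice bounds \emph{all} vertex coordinates simultaneously. For any vertex $w=\sum_i\lambda_i v_i$ of $P$ and each index $i$, let $S_i$ be the simplex obtained from $\Delta$ by replacing $v_i$ with $w$. All vertices of $S_i$ lie in $P$, so $S_i\subseteq P$ and its normalized volume is at most $V$; on the other hand a one-row determinant expansion gives normalized volume exactly $m\lvert\lambda_i\rvert$. Hence $\lvert\lambda_i\rvert\le V/m\le V$, so every vertex of $P$ lies in the box $[-V,V]^d$. Combined with the denominator bound from the previous step, the vertices lie in the finite set $\frac1m\Z^d\cap[-V,V]^d$. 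Letting $m$ range over $\{1,\dots,V\}$, $\Lambda$ over the finitely many overlattices of $\Z^d$ of index $m$, and the vertex set over subsets of this finite box, we get finitely many triples $(m,\Lambda,\text{vertex set})$, and every equivalence class is realized by at least one of them; finiteness follows.

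The point that must be gotten right — and the reason naive strategies fail — is that normalized volume is invariant under integral shears, so merely bounding the volume does not bound the polytope in fixed coordinates: a unimodular triangle such as $\conv\bigl((0,0),(1,0),(n,1)\bigr)$ has normalized volume $1$ yet projects onto a segment of length $n$. Any approach that fixes only part of the coordinate frame (for instance placing a facet into a bounded box, or projecting out a coordinate) leaves exactly this shear freedom unused and cannot control the remaining spread. Using a full-dimensional interior simplex $\Delta$ as the affine frame is precisely what eliminates the shear ambiguity, and the sub-simplex volume inequality $m\lvert\lambda_i\rvert\le V$ is what converts the volume bound into the coordinate bound.
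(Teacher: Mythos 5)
First, an important caveat: the paper does not actually prove Theorem~\ref{thm:LZ} --- it quotes the result from Lagarias and Ziegler~\cite[Theorem~2]{lagariasZiegler} --- so your argument can only be compared with their published proof, not with anything internal to this paper. Your proof is correct, and it shares the Lagarias--Ziegler skeleton of inscribing a full-dimensional lattice simplex $\Delta\subseteq P$ and using it as an affine frame (which, as you rightly emphasize, is exactly what kills the shear invariance), but the mechanics differ genuinely. Lagarias and Ziegler take $\Delta$ of \emph{maximal} volume among lattice simplices in $P$; maximality traps $P$ inside a bounded dilation of $\Delta$ (a point of $P$ outside it could replace a vertex of $\Delta$ and increase the volume), and a Hermite-normal-form reduction of $\Delta$ then moves $P$ by a \emph{lattice-preserving} map into a fixed box whose size depends only on $d$ and $V$, where finiteness is immediate. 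You instead let an arbitrary inscribed lattice simplex serve as the frame and extract the coordinate bound $\lvert\lambda_i\rvert\le V/m$ directly from the determinant identity giving $S_i$ normalized volume $m\lvert\lambda_i\rvert$; this neatly avoids both the maximality argument and the dilation step. The price is that your normalizing map $T^{-1}$ (sending $v_i$ to $e_i$) is \emph{not} lattice-preserving, so the lattice itself becomes part of the data: you classify pairs $(\Lambda,P')$ with $\Z^d\subseteq\Lambda\subseteq\frac{1}{m}\Z^d$ rather than lattice polytopes in a fixed lattice. That bookkeeping is the one place where your write-up is too terse: counting triples $(m,\Lambda,\text{vertex set})$ bounds the number of equivalence classes only if each triple determines \emph{at most one} class, whereas your closing claim that ``every equivalence class is realized by at least one of them'' asserts only surjectivity of a relation, which by itself proves nothing. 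The missing (easy) step is: if $P$ and $Q$ produce the same pair $(\Lambda,P')$ via frame maps $T_P$ and $T_Q$, then $T_Q\circ T_P^{-1}$ carries $\Z^d=T_P(\Lambda)$ onto $T_Q(\Lambda)=\Z^d$ and $P$ onto $Q$, hence is the required integral equivalence. With that line added your proof is complete and arguably more self-contained than the cited one; what the Lagarias--Ziegler version buys in exchange is the stronger, frequently used normal form that every lattice $d$-polytope of volume at most $V$ is equivalent, within the \emph{standard} lattice, to one contained in a fixed cube.
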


Putting these two results together we get:

\begin{corollary} \label{cor:LZ}
  Any family of lattice polytopes with bounded number of lattice
  points contains only finitely many integral equivalence classes of
  polytopes with interior lattice points.
\end{corollary}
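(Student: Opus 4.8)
The plan is to combine the two finiteness theorems just recalled, using the interior-point hypothesis to upgrade a bound on the \emph{number of lattice points} into a bound on the \emph{volume}. Suppose every member of the family has at most $M$ lattice points, and restrict attention to those polytopes $P$ that possess at least one interior lattice point (up to equivalence we may assume each $P$ is full-dimensional of dimension $d$). The strategy is to first bound the dimension $d$, then bound the volume within each fixed dimension, and finally invoke Theorem~\ref{thm:LZ} dimension by dimension.

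To bound the dimension, observe that a lattice $d$-polytope has at least $d+1$ vertices, all of which are lattice points; if in addition $P$ has an interior lattice point, distinct from these, then $P$ contains at least $d+2$ lattice points. Hence $d+2 \le M$, so the dimension ranges only over the finite set $\{1,\ldots,M-2\}$ (the degenerate case $d=0$ contributes a single equivalence class and may be ignored). It therefore suffices to prove finiteness within each fixed $d$ and take the finite union over $d$. So fix $d$, and let $P$ have $k \ge 1$ interior lattice points. Since these are in particular lattice points of $P$, we have $k \le M$, and Theorem~\ref{thm:Hensley} bounds the volume of $P$ by $k \cdot V(d) \le M \cdot V(d)$, a bound independent of $P$. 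The lattice $d$-polytopes in our family thus have volume bounded by $M\cdot V(d)$, so by Theorem~\ref{thm:LZ} they fall into only finitely many equivalence classes. Summing over the finitely many admissible dimensions completes the argument.

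The essential step, and the one place where the interior-point hypothesis is indispensable, is the application of Hensley's bound. Bounding the number of lattice points alone does \emph{not} bound the volume: Reeve-type simplices such as $\conv(0,\ e_1,\ e_2,\ e_1+e_2+t e_3)$ have exactly four lattice points for every $t \in \Z_{>0}$ yet volume $t/6 \to \infty$, and they possess no interior lattice point. It is precisely the presence of an interior point that activates Theorem~\ref{thm:Hensley} and thereby controls the volume; the dimension bound and the final assembly are routine. I expect the only subtlety to be the bookkeeping that $k \le M$ and that the reduction to fixed dimension is legitimate, both of which are immediate.
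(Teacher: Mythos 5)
Your proposal is correct and is exactly the argument the paper intends: since the interior lattice points are among the at most $M$ lattice points, Theorem~\ref{thm:Hensley} bounds the volume, and Theorem~\ref{thm:LZ} then gives finiteness in each (necessarily bounded) dimension. The paper states the corollary without proof as an immediate consequence of these two theorems, so your write-up, including the routine dimension-bounding step, matches the intended reasoning.
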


\begin{example}\label{singex}
Without the assumption on interior lattice points the result is not
true. For example, it is well-known (and was first observed by John
Reeve~\cite{Reeve}) that there are simplices such as
\[
 P_k = \conv \left( 
  \begin{smallmatrix}
    0&1&0&1 \\
    0&0&1&1 \\
    0&0&0&k
  \end{smallmatrix} \right)
  \]
with only $4$ lattice points but unbounded volume.
In particular, this  shows that 
the number of lattice points of a lattice polytope does not
give a bound on its volume. 
\end{example}

\medskip

On the classification side, most of the known results concern toric
Fano varieties. Equivalently, on the polyhedral side the
classifications deal with polytopes for which the primitive ray
generators of the normal fan are the vertices of a convex polytope.
In dimension two, $\Q$-Gorenstein toric Fano surfaces are known for
Gorenstein index $\le 17$ \cite{KKN10}. In dimension three, the finite
list of canonical toric Fano varieties was obtained by A.~Kasprzyk
\cite{Kas06}.  We refer the interested reader to the Graded Ring
Database \href{http://grdb.lboro.ac.uk}{\texttt{grdb.lboro.ac.uk}} for
these and other classification results. Gorenstein toric Fano
varieties, corresponding to so-called reflexive polytopes
\cite{Bat94}, are completely classified in dimension $\le 4$
\cite{KS98,KS00}. Toric Fano manifolds are classified up to dimension
$8$ \cite{Bat99, Sat00, KN09, Oeb07}; recently, B.~Lorenz computed
dimension~$9$. The complete list of the corresponding smooth reflexive
polytopes can be found in the database at
\href{http://polymake.org}{\texttt{polymake.org}}.

Higher-dimensional classification results of toric varieties are only
known in two cases:
in the Gorenstein Fano case under strong symmetry assumptions
\cite{VK85,Ewald,Nil06} or if the Picard number of a toric manifold is
at most $3$, i.e., the $d$-dimensional fan has at most $d+3$ rays, in
which case the variety is automatically projective \cite{KS91, Bat91}.

\subsection*{Acknowledgements}
{This project started during the AIM workshop
``Combinatorial challenges in toric varieties''. Bernd Sturmfels asked
the finiteness question, and the proof was worked out by the present
authors, assisted by Sandra Di Rocco, Alicia Dickenstein,
Diane Maclagan and Greg Smith. Benjamin Lorenz carried out the
classification
in his
Diploma thesis~\cite{LorenzDiplom}.
Work of Haase, Nill, and Lorenz supported by Emmy Noether and
Heisenberg grants HA4383/1, HA4383/4 of the German Research Society
(DFG). Work of Nill also supported by NSF grant DMS 1203162. 
Work of Hering supported by NSF grant DMS 1001859. Work of Paffenholz is supported by the Priority Program 1489 of the German Research Council (DFG). Work of Santos supported by the Spanish Ministry of Science
through grants MTM2011-22792 and CSD2006-00032 (i-MATH)}

\section{Polarized toric varieties and lattice polytopes.}
\label{preliminaries}

In this section we introduce notation and recall some basic 
facts about toric varieties. For more details 
we refer to~\cite[\S2.3]{CoxLittleSchenckToricBook}
or~\cite[Section~3.4]{Fulton}. 

\subsection{Lattice Polytopes}

Let $N\cong \Z^d$ be a lattice with dual
lattice $M = \Hom (N, \Z)$ and associated vector spaces $N_{\R} : =
N\otimes _{\Z}\R$ and $M_{\R} : = M\otimes _{\Z}\R$. A \emph{lattice
  polytope} $P\subseteq M_{\R}$ is the convex hull of a finite number
$u_1, u_2, \ldots, u_r$ of points in $M$. Any lattice polytope is the
intersection of finitely many affine half spaces with primitive normal
vectors $v_1, \ldots, v_s$ in $N$:
\begin{align*}
  P \ =\ \conv(m_1, \ldots, m_r)\ =\ \{u\in M_{\R}\mid \langle
  v_j, u\rangle\ge -\alpha_j\,, \; 1\le j\le s\}
\end{align*}
for integral $\alpha_j$'s. A \emph{face} of $P$ is the intersection of
$P$ with an affine hyperplane $H$ such that $P$ is completely
contained in one of the affine half spaces defined by $H$. Faces of a
lattice polytope are lattice polytopes themselves.

For a vertex $u$ of $P$, let $T_uP := \cone ( u'-u \mid u'\in P )$ be
the (inner) \emph{tangent cone} to $P$ at $u$.  It is dual to the
(inner) \emph{normal cone} $\sigma(P,u):=\{v\in N_{\R}: \langle
u'-u,v\rangle \ge 0\ \forall u'\in P\}$ of $P$ at $u$.  The normal
cones of the different vertices of $P$ together with their faces form
a polyhedral decomposition of $N_{\R}$ called the \emph{normal fan} of
$P$.

For a subset $S$ of $M_{\R}$, let $\aff(S)$ denote the affine span of
$S$.  We say that two lattice polytopes $P \subset M_{\R}$ and $P'
\subset M'_{\R}$ for lattices $M$ and $M'$ are \emph{integrally
  equivalent} if there is a lattice preserving affine map $\aff P \to
\aff P'$ that maps $M \cap \aff P$ bijectively to $M' \cap \aff P'$
and $P$ to $P'$.  Up to this integral equivalence, we can (and will)
always assume that our polytope $P$ is full dimensional, i.e. $\aff
P=M_{\R}$.

Let $e_1, \ldots, e_d$ be any basis of the lattice $M$. The
\emph{normalized volume} $V(P)$ is the volume that assigns $1$ to the
simplex $\conv(0,e_1, \ldots, e_d)$. In dimension $2$ Pick's
formula~\cite{pick} relates the normalized volume $V$ with the number
$i$ of interior lattice points and the number $b$ of boundary lattice
points via
\begin{align}
  V+2\ =\ 2i+b\,.\label{eq:pick}
\end{align}

\subsection{Line bundles and polytopes}
Let $k$ be an arbitrary field and 
let $\Sigma$ be a complete rational fan of dimension $d$ in $N_{\R}$.
Let $X = X(\Sigma)$ be the associated \emph{toric variety}, a normal
equivariant compactification of the algebraic torus $T\cong
(k^*)^d$. The dual lattice $M$ is naturally isomorphic to the
character lattice of $T$. Assume that $X$ is \emph{projective}
(equivalently, that $\Sigma$ is the \emph{normal fan} of a polytope),
and let $\L$ be an ample line bundle on $X$. The polarized toric
variety $(X, \L)$ corresponds to a lattice polytope $P \subseteq
M_{\R}$ of dimension $d$ with its normal fan equal
to $\Sigma$.  Moreover, we have an isomorphism
\[ H^0(X, \L) \cong \bigoplus _{u\in P\cap M} k \chi^u, \]
where $\chi ^u\colon  T \to k^*, (t_1, \ldots, t_d) \mapsto 
t_1^{u_1}\cdots t_d^{u_d}$ is the character corresponding to 
$u\in M$. 

A linear series $W\subseteq H^0(X,\L)$ induces a rational map $X
\dashrightarrow \P(W)$, which is equivariant if and only if $W$ is
torus invariant, that is, $W\cong \oplus _{u\in S} k\chi^u$ for some
$S\subseteq P\cap M$.  Letting $S=\{u_1,\dots,u_{|S|}\}$, the induced
map is given by $x \mapsto [\chi^{u_1}(x)\colon \cdots\colon
\chi^{u_{|S|}}(x)]$.  The degree of this map turns out to be the
\emph{normalized volume} of $\conv(S)$ -- the volume measured in
volumes of unimodular simplices.
The map is induced by a \emph{complete linear series} $W$
if and only if $W = H^0(X,\L)$, that is, $S=P\cap M$.  See
\cite[\S6]{CoxLittleSchenckToricBook}.

\begin{figure}[htp]
  \begin{minipage}[b]{.42\textwidth}
    \centering
    \begin{tikzpicture}[scale=.6, y=-1cm]
      \definecolor{fillColor}{gray}{0.65098}
      \path[draw=black,thick,fill=fillColor] (12.7,5.08) -- (11.43,5.08) -- (11.43,3.81) -- (12.7,3.81);
      
      \fill[draw=black] (11.43,5.08) circle (0.15875cm);
      \fill[draw=black] (11.43,3.81) circle (0.15875cm);
      \fill[draw=black] (12.7,3.81) circle (0.15875cm);
      \fill[draw=black] (12.7,5.08) circle (0.15875cm);
      \draw[thick,black] (12.7,5.08) -- (12.7,3.81);
      \path (11.43,3.4925) node[text=black,anchor=base east] {
$u_2$};
      \path (12.7,3.4925) node[text=black,anchor=base west] {
$u_3$};
      \path (12.7,5.87375) node[text=black,anchor=base west] {
$u_1$};
      \path (11.43,5.87375) node[text=black,anchor=base east]{
$u_0$};
    \end{tikzpicture}%
    \setcaptionwidth{\textwidth} 
      \caption{The Segre embedding $\P^1 \times \P^1 \hookrightarrow
        \P^3$ via $\OO(1,1)$}
  \end{minipage}
  \quad \qquad
  \begin{minipage}[b]{.42\textwidth}
\centering
\begin{tikzpicture}[scale=.6, y=-1cm]

\definecolor{fillColor}{gray}{0.65098}
\path[draw=black,thick,fill=fillColor] (13.97,5.08) -- (11.43,5.08) -- (11.43,2.54) -- cycle;

\fill[draw=black] (12.7,3.81) circle (0.15875cm);
\fill[draw=black] (12.7,5.08) circle (0.15875cm);
\fill[draw=black] (13.97,5.08) circle (0.15875cm);
\fill[draw=black] (11.43,2.54) circle (0.15875cm);
\fill[draw=black] (11.43,3.81) circle (0.15875cm);
\fill[draw=black] (11.43,5.08) circle (0.15875cm);
\path (11.1125,5.23875) node[text=black,anchor=base east] {
$u_0$};
\path (11.1125,3.96875) node[text=black,anchor=base east] {
$u_3$};
\path (11.1125,2.69875) node[text=black,anchor=base east] {
$u_5$};
\path (12.85875,3.4925) node[text=black,anchor=base west] {
$u_4$};
\path (14.12875,4.7625) node[text=black,anchor=base west] {
$u_2$};
\path (12.7,6) node[text=black,anchor=base] {
$u_1$};
\end{tikzpicture}%

    \setcaptionwidth{\textwidth} 
    \caption{The Veronese embedding $\P^2 \hookrightarrow
      \P^5$ via $\OO(2)$}
  \end{minipage}
\end{figure}

If $P$ and $P'$ are integrally equivalent, and if $(X, \L)$ and $(X',
\L')$ are the corresponding polarized toric varieties, then there
exists a torus equivariant isomorphism $\phi \colon X \to X'$ such
that $\phi^* \L' \cong \L$.

\subsection{Singularities and cones}
\label{sec:cones}

Let $\L$ be an ample line bundle on the toric variety $X(\Sigma)$ with
corresponding lattice polytope $P \subseteq M_{\R}$. Then $X(\Sigma)$
is covered by torus invariant affine pieces $U_u$ which correspond to
the vertices $u$ of $P$. 

For each tangent cone, the semigroup $T_uP \cap M$ is finitely
generated. Its unique minimal set of generators $\hilb(T_uP)$ 
is called the \emph{Hilbert basis} of the
cone~\cite[Proposition~1.2.22]{CoxLittleSchenckToricBook}.
The coordinate ring of the affine variety $U_u$ is the semigroup ring
$k[U_u] \ = \ k[T_uP \cap M]\,.$

The line bundle $\L$ is called \emph{very ample} if its global 
sections induce an embedding into projective space. 
The combinatorial condition for $\L$ to be very ample 
is that for every vertex $u$ of $P$, the shifted polytope $P-u$
contains the Hilbert basis, i.e.,  $\hilb(T_uP) \subseteq P-u$,
see~\cite[Section~3.4]{Fulton}. We call $P$ \emph{very ample} if this
happens.

\begin{example}[Example~\ref{singex} continued]
\label{singex2}
The line bundle corresponding to Reeve's simplex $P_k$ is not very
ample. The line bundle corresponding to $2P_k$ is normally generated,
so in particular it is very ample (see \cite[Theorem
1.3.3]{BrunsGubeladzeTrung97} or \cite{NO}). It induces an embedding
into $\P^{k+8}$.
\end{example}

\subsubsection{$\Q$-Gorenstein cones}
\label{def-multiplicity}
Let $\sigma \subset N_{\R}\cong \R^d$ be a pointed rational $d$-cone
with primitive generators $v_1,\ldots, v_r$. We call $\sigma$
\emph{$\Q$-Gorenstein} if the $v_i$ lie in an affine hyperplane in
$N_{\R}$. That is, if there is a linear functional on $\sigma$ which
takes the value $1$ on all $v_i$. This functional is called
\emph{height} and denoted $\height_\sigma \in M_{\R}$ . The
\emph{index} of $\sigma$ is the smallest $k \in \Z_{>0}$ so that
$k\cdot\height_\sigma \in M$.
We call $\sigma$ \emph{Gorenstein} if this index is equal to $1$.

These notions agree with the notions ($\Q$-)Gorenstein and index 
for the toric singularity associated with $\sigma$.
We define the \emph{multiplicity} $\mult(\sigma)$ as the normalized
volume of the \emph{nib} of $\sigma$
\[
\nib(\sigma) := \conv ( 0, v_1,\ldots, v_r ) = \{ x \in \sigma \mid
\langle \height_\sigma, x \rangle \le 1 \}
\]
which equals the product of the index with the normalized volume of
$\conv ( v_1,\ldots, v_r )$.
Observe that every simplicial cone is $\Q$-Gorenstein, and its
multiplicity equals $\det( v_1,\ldots, v_r)$.

Let $P$ be a lattice polytope with $\Q$-Gorenstein normal fan.
We define the \emph{multiplicity} of $P$ to be 
\[ \mult(P) = \max _{u} \mult(\sigma(P,u)),\] 
the maximal multiplicity of a normal cone to $P$. 

Note that for a projective toric variety $X$, the multiplicity does 
not depend on the polarization, so we can define 
the multiplicity $\mult(X) = \mult(P)$, where 
$P$ is a lattice polytope corresponding to an ample line bundle 
on $X$. 

\subsubsection{Simplicial cones}
The toric singularity $U_u$ is $\Q$-factorial if the tangent cone
$T_uP$ of $P$ at $u$ is simplicial, that is, it is generated by
a linearly independent set $\{v_1, \ldots, v_d\}$ of primitive
vectors. In this case, the singularity $U_u$ is a quotient
$k^d/G$ of affine space by a finite abelian group, and the
multiplicity is the cardinality of that group.
The \emph{box} of $T_uP$ is the half open parallelepiped
$$ \Box(T_uP) := \left\{ \sum_{i=1}^d \lambda_iv_i \mid \lambda_i \in
  [0,1) \text{ for } i = 1, \ldots, d \right\}\,,$$
and a \emph{box point} is one of the $\mult(T_uP)$ many lattice points
in $\Box(T_uP)$. Every Hilbert basis element that is not one of the
generators of $T_uP$ is a box point, and has smaller height than
$d$. In particular, we have $\hilb(T_uP)\setminus\{v_1, \ldots, v_d\}
\subset \Box(T_uP)$.

A cone is called \emph{unimodular} if its primitive minimal generators
form a lattice basis. 
Unimodularity is equivalent to having multiplicity $1$.  We call a
lattice polytope $P$ \emph{smooth} if every cone in its normal fan is
unimodular.

A lattice polytope is smooth if and only if the associated
projective toric variety $X$ is smooth (see for example~\cite[Section
2.1]{Fulton}).  Moreover, every ample line bundle on a smooth toric
variety is very ample.

\section{Finiteness Theorems}
\label{sec:theorems}

When we bound the number of lattice points, we arrive fairly quickly
at the desired finiteness result for smooth polytopes (see
Section~\ref{sec:fewpolytopes}). The case of simple and very ample
polytopes is treated in Section~\ref{sec:very_ample}. Finally, in
Section~\ref{sec:2dim}, we show that for polytopes with restricted
normal cones it suffices to bound the number of lattice points on the
edges.

\subsection{Few polytopes with $\nbr$ lattice points}
\label{sec:fewpolytopes}

Our finiteness theorems are based on the analysis of what happens in dimension two and then applying the following Lemma. 

\begin{lemma}\label{lem:2dtoalld}
Let $\nbr> d\geq 2$ be positive integers, let $\mathcal{F}$ be a finite family of $d$-dimensional lattice cones and let 
$\mathcal{P}$ be a finite family of lattice polygons. 
There are, up to integral equivalence, finitely many lattice
$d$-polytopes with less than $n$ vertices such that every
2-dimensional face is integrally equivalent to a polygon from
$\mathcal{P}$ and every normal cone is integrally equivalent to a cone
from $\mathcal{F}$.
\end{lemma}

\begin{proof}
  We first observe that there is a finite number of combinatorial
  types of fans $\Sigma$ with $\le \nbr$ maximal cones. Here, the
  \emph{combinatorial type} is given by the set of faces, partially
  ordered by inclusion.
  Once the combinatorial type of $\Sigma$ is fixed, there are  
  only finitely many choices to assign an element $[\sigma_u]$ 
  of $\mathcal F$ to a maximal cone of $\Sigma$ and to embed the face
  poset of $\sigma_u$ into the face  poset of $\Sigma$ (if possible at
  all). So, we only need to prove finiteness of the number of
  polytopes $P$ with a fixed combinatorial type so that at every
  vertex $u$ the edges containing $u$ are assigned facets of
  $[\sigma_u]$ and such that every two-dimensional face is integrally
  equivalent to a polygon in $\mathcal{P}$.
  There are only finitely many ways to embed the combinatorial type of
  a polygon from $\mathcal P$ into the combinatorial type of a
  $2$-face of $P$. We claim that these choices actually determine $P$
  up to equivalence.
  
  To this end, fix a vertex $u$ of $P$ and an element $\sigma_u$ of
  the equivalence class in  $\mathcal F$ that we assigned to $u$.
  This determines all $2$-dimensional faces of $P$ incident to $u$.
  In particular, if $u'$ is another vertex of $P$ adjacent to $u$,
  $u'$ together with all edges which are incident to $u'$ and
  contained in a common $2$-face with $u$ are determined. The
  directions of these edges, together with the edge $uu'$ span $\R^d$
  as a vector space.
  They thus pin down the normal cone $\sigma_{u'}$ in its class.

  In summary, fixing a vertex and its normal cone also fixes all
  adjacent vertices and their normal cones. As the vertex-edge graph
  of $P$ is connected, this determines $P$.
\end{proof}

\begin{theorem}\label{thm:points}
Let $\nbr> d\geq 2$ be positive integers, and let $\mathcal{F}$ be a
finite family of $d$-dimensional lattice cones.
There are, up to integral equivalence, finitely many lattice
$d$-polytopes with at most $n$ lattice points such that every normal
cone is equivalent to a cone from $\mathcal{F}$.
\end{theorem}

\begin{proof}
In dimension two the statement follows from
Theorem~\ref{thm:LZ}. Indeed, Pick's formula~\eqref{eq:pick} implies
that the volume $V$ and the number of lattice points $\nbr$ of a
lattice polygon bound each other:
  \[
  \nbr\le V+2\le 2\nbr-3.
  \]
  Then Lemma~\ref{lem:2dtoalld} implies the theorem. 
\end{proof}

By taking $\mathcal F$ to consist of a single element, the unimodular
cone, Theorem~\ref{thm:points} implies the following weak version of
Theorem~\ref{thm:polytopes}:

\begin{corollary}
\label{coro:smoothembeddings}
  Let $\nbr$ be a nonnegative integer. Then, there are only finitely
  many smooth lattice polytopes with $\nbr$ lattice points.
\end{corollary}

\begin{example}\label{Hirzebruch}
Corollary~\ref{coro:smoothembeddings} does not imply that there are only
finitely many projective torus equivariant embeddings into a fixed
projective space. If we don't require the linear series to be complete,
Figure~\ref{hirzebruchpic} shows how to embed an
arbitrary Hirzebruch surface torically into $\P^5$. 
\end{example}

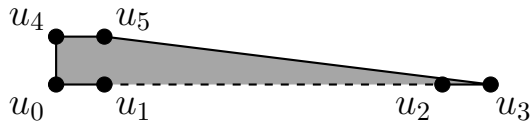
\begin{figure}[htp] 
  \begin{center}
\begin{tikzpicture}[scale=.5, y=-1cm]

\definecolor{fillColor}{gray}{0.65098}
\path[fill=fillColor] (11.43,5.08) -- (11.43,3.81) -- (12.7,3.81) -- (22.86,5.08);

\draw[thick,black] (12.7,5.08) -- (11.43,5.08) -- (11.43,3.81) -- (12.7,3.81);

\fill[draw=black] (11.43,5.08) circle (0.2cm);
\fill[draw=black] (11.43,3.81) circle (0.2cm);
\fill[draw=black] (12.7,5.08) circle (0.2cm);
\fill[draw=black] (12.7,3.81) circle (0.2cm);
\fill[draw=black] (21.59,5.08) circle (0.2cm);
\fill[draw=black] (22.86,5.08) circle (0.2cm);
\draw[thick,black] (21.59,5.08) -- (22.86,5.08) -- (12.7,3.81);
\draw[thick,dashed,black] (12.7,5.08) -- (21.59,5.08);
\path (11.43,5.87375) node[text=black,anchor=base east] {\large{}$u_0$};
\path (12.7,5.87375) node[text=black,anchor=base west] {\large{}$u_1$};
\path (21.59,5.87375) node[text=black,anchor=base east] {\large{}$u_2$};
\path (22.70125,5.87375) node[text=black,anchor=base west] {\large{}$u_3$};
\path (11.43,3.4925) node[text=black,anchor=base east] {\large{}$u_4$};
\path (12.7,3.4925) node[text=black,anchor=base west] {\large{}$u_5$};

\end{tikzpicture}%
  \end{center}
  \caption{Hirzebruch surface $X(\F_a) \hookrightarrow \P^5$}
  \label{hirzebruchpic}
\end{figure}

\begin{corollary} \label{cor:Gorenstein}
  For nonnegative integers $m$ and $\nbr$, there are only finitely
  many lattice polytopes with $\Q$-Gorenstein normal cones of
  multiplicity bounded by $m$ and with $\nbr$ lattice points.
\end{corollary}

\begin{proof}
  Applying Theorem~\ref{thm:LZ} to the convex hull of $0$ and the
  primitive generators of a $\Q$-Gorenstein cone, we see that the
  family of $\Q$-Gorenstein cones with multiplicity $\le m$ contains
  only finitely many equivalence classes. Now apply
  Theorem~\ref{thm:points}.
\end{proof}

We consider two morphisms to $\P^\nbr$ the same if they differ by an
automorphism of $\P^\nbr$.  Using the dictionary between toric
morphisms and lattice polytopes, Corollary \ref{cor:Gorenstein}
implies the following corollary.

\begin{corollary}\label{morphisms}
Let $\nbr$ and $m$ be nonnegative integers. There are 
fi\-ni\-te\-ly many morphisms from some 
$\Q$-Gorenstein toric variety $X$
with $\mult(X) \leq m$
to $\P^\nbr$ that are induced by a complete linear series. 
\end{corollary}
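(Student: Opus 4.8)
The plan is to translate the statement into the language of lattice polytopes via the dictionary of Section~\ref{preliminaries} and then invoke Corollary~\ref{singcor}. First I would set up the correspondence. A morphism $\phi\colon X\to\P^N$ induced by a complete linear series is the map $\phi_{|\L|}$ attached to the complete linear series $W=H^0(X,\L)$ of the ample line bundle $\L=\phi^*\OO_{\P^N}(1)$. Since $X$ is a complete $\Q$-Gorenstein toric variety and $\L$ is ample, the polarized variety $(X,\L)$ corresponds to a full-dimensional lattice polytope $P\subseteq M\otimes_{\Z}\R$ of dimension $d=\dim X$ whose normal fan is the fan of $X$. Because $X$ is $\Q$-Gorenstein with $\mult(X)\le m$, all normal cones of $P$ are $\Q$-Gorenstein and $\mult(P)=\mult(X)\le m$. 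Moreover, ample line bundles on complete toric varieties are basepoint-free, so $\phi_{|\L|}$ is indeed an everywhere-defined morphism, and the identification $H^0(X,\L)\cong\bigoplus_{u\in P\cap M}k\chi^u$ shows that $\dim W=|P\cap M|$.

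Next I would read off the numerical constraints. The morphism $\phi_{|\L|}$ maps $X$ into $\P(W)\cong\P^{\,|P\cap M|-1}$, so for its image to lie in $\P^N$ we need $|P\cap M|\le N+1$. As $P$ is $d$-dimensional its vertices are at least $d+1$ lattice points, whence $d+1\le|P\cap M|$; together with the hypothesis $d<N$ this confines $d$ to $\{0,1,\dots,N-1\}$ and $|P\cap M|$ to $\{1,\dots,N+1\}$. Conversely, up to the stated equivalence the morphism is recovered from $P$: a labeling of $P\cap M$ as $u_1,\dots,u_{|P\cap M|}$ fixes $\phi_{|\L|}$ as $x\mapsto[\chi^{u_1}(x):\cdots]$ into a coordinate subspace $\P^{\,|P\cap M|-1}\subseteq\P^N$, and any two labelings, any two such coordinate subspaces, and any two torus-equivariant representatives of $(X,\L)$ differ by an element of $\mathrm{Aut}(\P^N)$. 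By the remark following Definition~\ref{equivalence}, integrally equivalent polytopes yield torus-equivariantly isomorphic polarized toric varieties, so up to the stated equivalence the morphism depends only on the integral equivalence class of $P$.

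Finally I would assemble the count. For each of the finitely many admissible pairs $(d,\ell)$ with $d\in\{0,\dots,N-1\}$ and $\ell\in\{1,\dots,N+1\}$, Corollary~\ref{singcor} (with its $N+1$ set equal to $\ell$) provides only finitely many integral equivalence classes of $d$-polytopes with $\Q$-Gorenstein normal cones of multiplicity $\le m$ and exactly $\ell$ lattice points. Taking the union over these finitely many pairs gives finitely many classes of polytopes, and hence, by the previous paragraph, finitely many morphisms.

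The main obstacle is the second step: making precise that the morphism is determined, up to automorphisms of $\P^N$ and isomorphism of the source, by the integral equivalence class of $P$. In particular one must verify that ample (rather than merely very ample) $\L$ still yields a morphism through basepoint-freeness, that reordering the monomials $\chi^{u_i}$ and changing the linear embedding of the spanned subspace are both realized inside $\mathrm{Aut}(\P^N)$, and that $\mult(X)$ computed on the variety agrees with $\mult(P)$ as defined combinatorially --- which is exactly the polarization-independence noted in Section~\ref{preliminaries}.
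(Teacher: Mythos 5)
Your proof is correct and takes essentially the same route as the paper, whose entire argument for this corollary is the one-line remark that the polytope/polarized-variety dictionary reduces it to Corollary~\ref{singcor}; you have simply made explicit the steps the paper leaves implicit (bounding $\dim P$ and $\#(P\cap M)$, and recovering the morphism from the integral equivalence class of $P$ up to $\mathrm{Aut}(\P^N)$). Note also that, like the paper, you tacitly read ``induced by a complete linear series'' as referring to an \emph{ample} line bundle when you declare $\L=\phi^*\OO_{\P^N}(1)$ ample --- this is the intended interpretation (for a mere morphism the pullback need only be nef, and without ampleness the statement would fail, e.g.\ all Hirzebruch surfaces project to a line in $\P^N$), so flagging it as a hypothesis rather than a consequence would be the only improvement.
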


Example~\ref{singex} shows that the assumption that the multiplicities
are bounded in Corollaries~\ref{cor:Gorenstein} and~\ref{morphisms} is
needed.

\subsection{Simple, very ample polytopes with $\nbr$ lattice points}
\label{sec:very_ample}

In this section we will show that when $P$ is simple and very ample, then 
the multiplicity of $P$ is bounded and so the corresponding assumption in 
Corollary~\ref{cor:Gorenstein} comes for free. 

In order to deduce Theorem~\ref{thm:Qfactorialembeddings} from
Corollary~\ref{morphisms}, we need another lemma.

\begin{lemma}\label{lemma:veryample}
  For nonnegative integers $\nbr$ and $d$ there are
  only finitely many $\Q$-Gorenstein cones $\sigma \subset \R^d$ so
  that
  \begin{equation} \tag{$\ast$} \label{eq:veryampleBound}
    \# \hilb(\sigma) \ + \ \#(\nib(\sigma) \cap \Z^d) \ \le \ \nbr \,.
  \end{equation}
\end{lemma}

Observe that bounding $\# \hilb(\sigma)$ or $\#(\nib(\sigma) \cap
\Z^d)$ alone is not enough.
Examples~\ref{egBruns} and~\ref{singex} show infinitely many cones
with bounded $\#(\nib(\sigma) \cap \Z^d)$, and the following cones
have only three Hilbert basis elements, and multiplicity $2a$:
\[
C_a:=\cone\{(1,a),(-1,a)\}\subseteq \R^2, \quad
\hilb(C_a)=\{(1,a),(-1,a),(0,1)\}.
\]

\begin{proof}[Proof of Lemma~\ref{lemma:veryample}]
  We will show by induction on $d$ that \eqref{eq:veryampleBound}
  implies that  
  $\mult(\sigma)$ is bounded. 
  Then  Theorem~\ref{thm:LZ}  implies that there are 
  only finitely many choices for $\sigma$.

  For $d=1$ there is only one
  cone. For $d=2$, Pick's formula~\eqref{eq:pick} tells us that
  $\mult(\sigma) \leq  2\#(\nib(\sigma) \cap \Z^2) - 5$.
  So let us assume that the lemma is true for $d-1$. 
  Because of Corollary~\ref{cor:LZ}, we can assume that
  $\nib(\sigma)$ has no interior lattice points. This implies that all
  interior Hilbert basis elements of $\sigma$ have height $\ge 1$.
  By induction, there is a minimal height $\epsilon(d-1,\nbr) > 0$,
  depending only on $d-1$ and $\nbr$, of a Hilbert basis element in
  the boundary of $\sigma$. Let $\epsilon = \min \{\epsilon(d-1,\nbr),1\}$. 

  Triangulate $\sigma = \cup_{i=1}^r \sigma_i$ into simplicial cones
  using only rays of $\sigma$. Every Hilbert basis element of $\sigma$
  is a box point of one of the $\sigma_i$. As $\sigma$ has at most $\nbr$
  rays, every box point belongs to less than $\binom{\nbr}{d}$ of the
  $\sigma_i$.
  
  Now, every box point of every $\sigma_i$ has a representation
  $\sum_{v \in \hilb(\sigma)} a_vv$ with $a_v \in \Z_{\ge0}$. On the
  other hand, any box point has height $< d$, so that in the above
  representation we must have $\epsilon \cdot \sum_{v \in \hilb(\sigma)} a_v <
  d$ which leaves at most $\left( \begin{smallmatrix} \nbr + \lfloor
      d/\epsilon \rfloor \\ \nbr \end{smallmatrix} \right)$
  possibilities for the coefficients $a_v$.
  In other words, 
  $$\mult(\sigma) = \sum_{i=1}^r \# \Box(\sigma_i) < 
  \binom{\nbr}{d} \cdot \# \left( \bigcup_{i=1}^r \Box(\sigma_i) \right) <
  \binom{\nbr}{d} \binom{\nbr + \lfloor d/\epsilon \rfloor}{\nbr} \,.$$
\end{proof}

The following statement is equivalent to
Theorem~\ref{thm:Qfactorialembeddings}.

\begin{theorem}\label{thm:veryample}
Let $\nbr$ be a nonnegative integer. Then there exist only 
finitely many simple and very ample polytopes 
with $\nbr$ lattice points. 
\end{theorem}

\begin{proof}
Since $P$ is simple, every tangent cone to $P$ is $\Q$-Gorenstein.
Moreover, since $P$ is very ample, a translate of the Hilbert basis
for each tangent cone is a subset of the lattice points of $P$. Since
$P$ has $\nbr$ lattice points, it follows from Lemma
\ref{lemma:veryample} that there are only finitely many equivalence
classes of tangent cones. So there are only finitely many equivalence
classes of normal cones. Now the claim follows from Theorem
\ref{thm:points}.
\end{proof}

The following example shows that we need to assume that $P$ is simple
(resp.,~that $X$ is $\Q$-factorial) in Theorem~\ref{thm:veryample}
(resp.,~Theorem~\ref{thm:Qfactorialembeddings}).

\begin{example}\label{egBruns}
  In ~\cite[p.2290]{MFO07} Winfried Bruns gives an example of a very
  ample divisor on a toric $3$-fold whose complete linear series does
  not yield a projectively normal embedding. This example generalizes
  to a family of very ample polytopes
  $$ Q_k := \conv \left(
    \begin{smallmatrix}
      0&1&0&0&1&0&1&1 \\
      0&0&1&0&0&1&1&1 \\
      0&0&0&1&1&1&k&k+1
    \end{smallmatrix} \right)$$
  with $8$ lattice points but unbounded volume. Observe that these
  polytopes have a Gorenstein normal fan with 
  $\mult(Q_k)=k+1$. However, the tangent cone $T_{(0,1,0)}(Q_k) =
  \cone((0,-1,0),(0,0,1),(1,-1,0),(1,0,k))$ is not $\Q$-Gorenstein for
  $k\ge 2$.
\end{example}

\subsection{Polytopes with $\nbr$ lattice points on their edges}
\label{sec:2dim}

The proof of Theorem~\ref{thm:points} and, hence, those of
Corollaries~\ref{coro:smoothembeddings}
and~\ref{cor:Gorenstein}/\ref{morphisms}, were based on Pick's
formula~\eqref{eq:pick}, which allowed us to bound the number of
equivalence classes of polygons with a given number of lattice
points. We now show that bounding the number of lattice points
\emph{along the edges} of the polygons is enough, if we also put a
bound on the multiplicity of the cones.
The following example shows that bounding the multiplicity is
necessary.

\begin{example}\label{ex:multnec} 
The polygons $ P_{pq} = \conv((-1,-1),(p,0),(0,q))$ for $p$ and $q$
relatively prime positive integers form an infinite family of polygons
having only 3 lattice points on their edges.
\end{example}

We call a lattice polygon $P$ a \emph{$(m,\nbr)$-polygon} if $P$ has
at most $\nbr$ lattice points on the boundary and $\mult(P) \le m$.
Then our most general finiteness result for polygons is the following
theorem.
\begin{theorem}
  \label{thm:mainforpolygons}
  Let $m$ and $n$ be positive integers.
  There are only finitely many integral equivalence classes of 
  $(m,\nbr)$-polygons.
\end{theorem}

Before proving Theorem~\ref{thm:mainforpolygons} in Section~\ref{sec:finpoly},
the following strong versions of Theorem~\ref{thm:points}
and Corollaries~\ref{coro:smoothembeddings} and~\ref{cor:Gorenstein}
are derived. 
\begin{theorem}\label{thm:points2}
  Let $\nbr > d\geq 2$ be positive integers, and let $\mathcal{F}$ be
  a finite family of $d$-dimensional lattice cones.
  There are, up to integral equivalence, finitely many lattice
  $d$-polytopes with at most $\nbr$ lattice points on edges and such
  that every normal cone is equivalent to a cone from $\mathcal{F}$.
\end{theorem}

\begin{proof}
Let $P$ be a $d$-polytope such that every normal cone is in
$\mathcal{F}$. This implies that the normal cones to every
two-dimensional face of $P$ are contained in a finite family of
two-dimensional cones $\mathcal{F'}$. In particular, each such
two-dimensional face has bounded multiplicity. Since the number
of lattice points on the edges of a face of a polytope $P$ are bounded
by the number of lattice points on the edges of $P$, it then follows
from Theorem~\ref{thm:mainforpolygons} that the set $\mathcal{P}$ of
polygons that can occur as two-dimensional faces of a polytope $P$
satisfying the assumptions of the theorem is finite. 
Now the claim follows from Lemma~\ref{lem:2dtoalld}. 
\end{proof}

We can now apply this to prove our main Theorem. 
\begin{proof}[Proof of Theorem~\ref{thm:smooth}/~\ref{thm:polytopes}]
Fix $\nbr$. Then any lattice polytope with less than $\nbr$ lattice
points on its edges has less than $\nbr$ vertices, so $d=\dim(P) \leq
\nbr-1$. So it is enough to show that there are finitely many smooth
lattice polytopes of dimension $d$ with  less than $\nbr$ lattice
points on their edges. When $P$ is smooth, then every normal cone to
$P$ is unimodular, so this follows from applying
Theorem~\ref{thm:points2} to $\nbr\geq n'>d$ and $\mathcal{F}$
consisting of the unimodular cone of dimension $d$.
\end{proof}

Theorem~\ref{thm:smooth} implies the following. We are not aware of a
more direct proof of this statement.
\begin{corollary}
For smooth lattice polytopes, bounding the number of lattice points on
the edges bounds the number of total lattice points. In other words,
for a line bundle on a smooth polarized toric variety $(X,L)$ bounding
$d(L)$ bounds  $h^0(X,L)$.
\end{corollary}

As lattice polygons are always $\Q$-Go\-ren\-stein and very ample,
Example~\ref{ex:multnec} shows that the statement of
Theorem~\ref{thm:smooth} does not hold when we only assume that $P$ is
simple and very ample. In fact, in the proof of
Theorem~\ref{thm:veryample}, we used the total number of lattice
points to bound the multiplicity. If we assume in addition that the
multiplicity is bounded, we obtain the following.

\begin{corollary} \label{cor:Gorenstein2}
  For nonnegative integers $m$ and $\nbr$, there are only finitely
  many lattice polytopes with $\Q$-Gorenstein normal cones of
  multiplicity bounded by $m$ and with $\nbr$ lattice points on
  edges.
\end{corollary}
\begin{proof}
The first part is like the proof of Corollary~\ref{cor:Gorenstein},
but then apply Theorem~\ref{thm:points2}.
\end{proof}

\subsubsection{Finitely many polygons}\label{sec:finpoly}

In this section we prove Theorem~\ref{thm:mainforpolygons}, arguing on
the normal fan of a polygon. A $2$-dimensional polyhedral fan $\Sigma$
is a \emph{$[k,n]$-fan} if it is complete, has $k'\le k$ rays
(one-dimensional faces) with primitive generators $v_1, \ldots,
v_{k'}\in N$ such that there are non-negative integers $\lambda_1,
\ldots, \lambda_{k'}$ with $\sum \lambda_i v_i=0$, $\sum\lambda_i\le
n$ and $\mathrm{span}(v_j\mid \lambda_j\ne 0) =N_{\R}$. The last
condition means that there exists a polygon whose normal fan is
refined by  $\Sigma$ with at most $n$ lattice points on its boundary.

\begin{lemma} \label{lemma:dn-polygon-to-ndn-fan}
  Let $P$ be an $(m,n)$-polygon with normal fan $\Sigma$. Then there
  is a unimodular $[nm,n]$-fan $\overline \Sigma$ refining $\Sigma$.
\end{lemma}

\begin{proof}
  The minimal unimodular subdivision $\overline \Sigma$ of $\Sigma$
  (as discussed in detail in \cite[\S10.2]{CoxLittleSchenckToricBook})
  introduces less than $m$ new rays for each cone of multiplicity $\le
  m$. So $\overline \Sigma$ has at most $nm$ rays.

  Let $\lambda_v$ be the lattice length of the edge of $P$ dual to a
  ray $v$. In particular, $\lambda_v=0$ for the extra generators
  introduced in the refinement from $\Sigma$ to $\overline
  \Sigma$. This choice of coefficients certifies $\overline \Sigma$
  as an $[nm,n]$-fan by Minkowski's Theorem (cf.~\cite[Lemma
  4.9]{MR2206625}, \cite[p.~332]{Gruenbaum}).
\end{proof}

For what follows, we need a classification result for complete
two-dimensional unimodular fans
(cf.~\cite[Theorem~V.6.6]{Ewald} or \cite[Section 2.5]{Fulton}).
\begin{enumerate}
\item Any complete two-dimensional unimodular fan $\Sigma$ is
integrally equivalent to either the fan of $\P^2$,
which is generated by the three vectors $(1,0)$, $(0,1)$, and
$(-1,-1)$,
 or to a refinement of the fan
$\F_a$ of a \emph{Hirzebruch surface} for $a \ge 0$: $\F_a$ is
the complete fan with rays generated by $v_1=(1,0)$, $v_2=(0,1)$,
$v_3=(-1,0)$, and $v_4=(a,-1)$. 
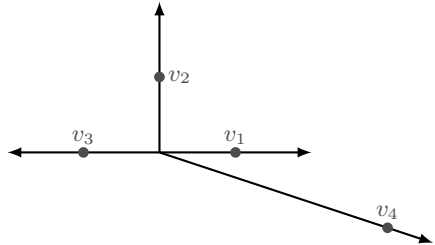
\begin{figure}[htb]
  \centering
{\scriptsize
\begin{tikzpicture}[scale=1.0]
\draw[thick,arrows={-latex}] (0,0) -- (-2,0);
\draw[thick,arrows={-latex}] (0,0) -- (2,0);
\draw[thick,arrows={-latex}] (0,0) -- (0,2);
\draw[thick,arrows={-latex}] (0,0) -- (3.6,-1.2);

\fill[black!70] (1,0) circle (2pt) node[above] {$v_1$};
\fill[black!70] (0,1) circle (2pt) node[right] {$v_2$};
\fill[black!70] (-1,0) circle (2pt) node[above] {$v_3$};
\fill[black!70] (3,-1) circle (2pt) node[above] {$v_4$};  
\end{tikzpicture}
}
  \caption{The fan $\F_a$ of the $a$-th Hirzebruch surface.}
\label{fig:hirzebruch-fan}
\end{figure}
\item The refinement from $\F_a$ to $\Sigma$ can be done introducing
  one ray at a time and in such a way that all intermediate fans are
  also unimodular. That is, in each refinement step a certain cone
  $\cone(v_1,v_2)$ with $|\det(v_1,v_2)|=1$ is subdivided into
  $\cone(v_1,v_3)$ and $\cone(v_3,v_2)$ with $v_3=v_1+v_2$. In
  polyhedral terms this is an example of a \emph{stellar
    subdivision}. In algebraic geometry terms, this corresponds to
  a blow-up at the fixed point corresponding to the cone
  $\cone(v_1,v_2)$.
\end{enumerate}

The following result bounds the parameter $a$ of the starting
Hirzebruch surface in terms of the parameters $m$ and $n$ of the fan:

\begin{lemma}\label{lem:Hirzebruch}
  Every  unimodular $[k,n]$-fan $\Sigma$ is equivalent to the fan of
  $\P^2$ or to a stellar subdivision of a Hirzebruch fan $\F_a$ with
  $0 \le a \le n-2$.
\end{lemma}

\begin{proof}
  Assume that $\Sigma$ is not the fan of $\P^2$ and let $a \ge 0$ be the
  minimal integer such that $\Sigma$ is a stellar subdivision of (a
  fan equivalent to) $\F_a$.
 In the case $a=0$ there is nothing to
  prove.
So let us     assume $a>0 $.
  Then the cones $\cone((0,1),(-1,0))$ and $\cone((-1,0),(a,-1))$ of
  $\F_a$ must be unsubdivided in $\Sigma$. Otherwise $\Sigma$ would
  contain the ray generated by either $(-1,1)$ or $(a-1,-1)$ and,
  hence, it would be a refinement of a fan equivalent to $\F_{a-1}$ as
  well.
  Hence, schematically, $\Sigma$ looks as in Figure~\ref{fig:hirz_subdiv}.
  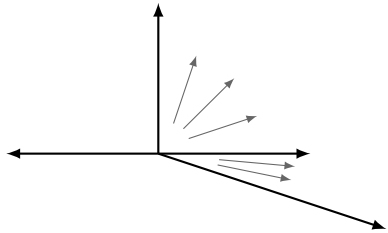
\begin{figure}
{\scriptsize
\begin{tikzpicture}[scale=1.0]
\draw[thick,arrows={-latex}] (0,0) -- (-2,0);
\draw[thick,arrows={-latex}] (0,0) -- (2,0);
\draw[arrows={-latex},black!60] (0.2,0.4) -- (0.5,1.3);
\draw[arrows={-latex},black!60] (0.33,0.33) -- (1.0,1.0);
\draw[arrows={-latex},black!60] (0.4,0.2) -- (1.3,0.5);
\draw[thick,arrows={-latex}] (0,0) -- (0,2);
\draw[arrows={-latex},black!60] (0.8,-0.08) -- (1.8,-0.17);
\draw[arrows={-latex},black!60] (0.78,-0.15) -- (1.75,-0.35);
\draw[thick,arrows={-latex}] (0,0) -- (3,-1.0); 
\end{tikzpicture}
}
    \caption{A schematic picture of the fan in the proof of
      Lemma~\ref{lem:Hirzebruch}}
     \label{fig:hirz_subdiv}
  \end{figure}
  Now consider the rays with non-zero coefficient $\lambda$ in the
  expression certifying that $\Sigma$ is a $[k,n]$-fan. Since they
  positively span $N_{\R}$, at least one of them must have negative
  first coordinate and at least one of them must have negative second
  coordinate. The discussion above implies that the only generator
  with negative first coordinate is $(-1,0)$, and that every generator
  with negative second coordinate has first coordinate $\ge a$. Hence,
  in order to have $\sum \lambda_v v=0$ we must have
  $\lambda_{(-1,0)}\ge a$. Hence, the sum of all coefficients, $n$, is
  at least $a+2$.
\end{proof}

\begin{proof}[Proof of Theorem~\ref{thm:mainforpolygons}]
As the $t$-th dilation of a unimodular triangle has $3t$ lattice
points on the boundary, there are at most $\lfloor n/3\rfloor$ of them
within our class. So, for the rest of the proof we bound the number of
polygons which are not dilations of unimodular triangles.  By
Lemma~\ref{lemma:dn-polygon-to-ndn-fan}, for any such polygon $P$
there exists a smooth $[nm,n]$-fan $\overline{\Sigma}$ refining the
dual fan of $P$.

By Lemma~\ref{lem:Hirzebruch}, $\overline{\Sigma}$ is obtained from
a  Hirzebruch surface $\mathbb{F}_a$ with $a\leq n-2$ by a sequence of
at most $nm-4$ unimodular stellar subdivisions. Since the number of
possible unimodular stellar subdivisions in a unimodular
$2$-dimensional fan with $i$ rays is finite (it actually equals $i$)
there is only a finite number of possibilities for the fan
$\overline{\Sigma}$, hence also for the polygon $P$.
\end{proof}

\subsubsection{How many polygons, and how big?}
We now look at the refinement process described above in more
detail in order to give estimates of how many polygons arise in
Theorem~\ref{thm:mainforpolygons} and what their maximum area is. We
do so only in the smooth case and obtain these results:

\begin{theorem}
\label{thm:fewpolygons}
Let $k\le n$ be positive integers. Then, the number of smooth $k$-gons
with $n$ boundary points is bounded above by
$4^k\binom{n}{k}\frac{n}{k}$.
\end{theorem}

\begin{theorem}
\label{thm:smallpolygons}
Let $k\le n$ be positive integers. Then, every smooth $k$-gon with $n$
boundary points has area bounded above by $\phi^{2k}n^2$, where
$\phi=1.618\dots$ is the golden ratio.
\end{theorem}

The starting point for the bound of Theorem~\ref{thm:fewpolygons} is
that the different unimodular refinements of a unimodular cone can be
recorded via binary trees. Remember that a binary tree is a rooted
tree in which every node other than the leaves has exactly two
children, labeled as ``left'' and ``right''. The number of different
binary trees with $k$ leaves is the Catalan number
$C_{k-1}=\frac{1}{k}\binom{2k-2}{ k-1}\le \frac{1}{k}
4^k$~\cite[Ex.\ 6.19(d)]{StanleyEnumerative2}.
If $\Sigma$ is a unimodular refinement of a unimodular $2$-dimensional
cone, we associate to $\Sigma$ the binary tree $T_\Sigma$ that has one
internal node for each ray introduced in the refinement process and
one leaf for each unimodular cone of $\Sigma$. See
Figure~\ref{fig:bintree} for an illustration. That is:

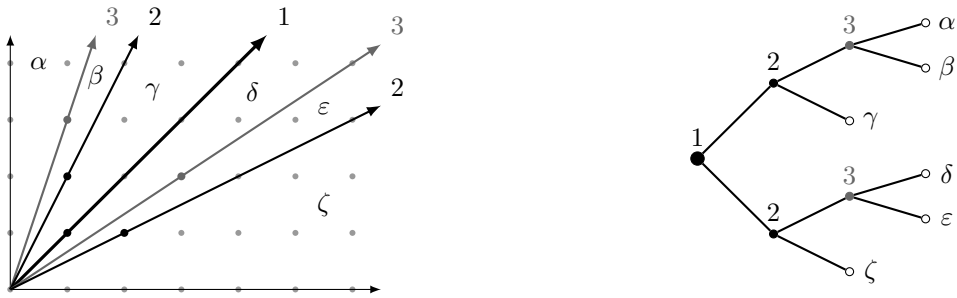
\begin{figure}[tbp]
      \begin{subfigure}[b]{0.52\textwidth}
         \begin{center}
{\footnotesize
\begin{tikzpicture}[scale=0.75]

\foreach \x in {0,...,6} {
\foreach \y in {0,...,4} {
\fill[black!40] (\x,\y) circle (1.5pt);
} }

\draw[black,arrows={-latex}] (0,0) -- (6.5,0);
\draw[black,arrows={-latex}] (0,0) -- (0,4.5);

\fill[black!60] (1,3) circle (2.0pt);
\draw[black!60,arrows={-latex},thick] (0,0) -- (1.500,4.5) node[above right] {$3$};
\fill[black] (1,2) circle (2.0pt);
\draw[black,arrows={-latex},thick] (0,0) -- (2.25,4.5) node[above right] {$2$};
\fill[black] (1,1) circle (2.0pt);
\draw[black,arrows={-latex},very thick] (0,0) -- (4.5,4.5) node[above right] {$1$};
\fill[black!60] (3,2) circle (2.0pt);
\draw[black!60,arrows={-latex},thick] (0,0) -- (6.5,4.333) node[above right] {$3$};
\fill[black] (2,1) circle (2.0pt);
\draw[black,arrows={-latex},thick] (0,0) -- (6.5,3.25) node[above right] {$2$};

\node at (0.50,4.00) {$\alpha$};
\node at (1.50,3.75) {$\beta$};
\node at (2.50,3.50) {$\gamma$};
\node at (4.25,3.50) {$\delta$};
\node at (5.50,3.20) {$\varepsilon$};
\node at (5.50,1.50) {$\zeta$};

\end{tikzpicture}
}
      \end{center}
      \end{subfigure}~
      \begin{subfigure}[b]{0.47\textwidth}
      \begin{center}
{\footnotesize
\begin{tikzpicture}[dotnode/.style={circle,draw=black,fill=none,inner sep=0pt,minimum size=3pt},node distance=8pt]

\node[dotnode,minimum size=5pt,fill=black,draw=black] (root) at (0,0) {};
\node[dotnode,fill=black,draw=black] (l) at (1.0,1.0) {};
\node[dotnode,fill=black,draw=black] (r) at (1.0,-1.0) {};
\node[dotnode,fill=black!60,draw=black!60] (ll) at (2,1.5) {};
\node[dotnode] (lr) at (2,0.5) {};
\node[dotnode] (lll) at (3,1.8) {};
\node[dotnode] (llr) at (3,1.2) {};
\node[dotnode,draw=black!60,fill=black!60] (rl) at (2,-0.5) {};
\node[dotnode] (rr) at (2,-1.5) {};
\node[dotnode] (rll) at (3,-0.2) {};
\node[dotnode] (rlr) at (3,-0.8) {};

\node[right of=lll] {$\alpha$};
\node[right of=llr] {$\beta$};
\node[right of=lr] {$\gamma$};
\node[right of=rll] {$\delta$};
\node[right of=rlr] {$\varepsilon$};
\node[right of=rr] {$\zeta$};

\node[above of=root] {1};
\node[above of=l] {2};
\node[above of=r] {2};
\node[above of=ll,black!60] {3};
\node[above of=rl,black!60] {3};

\draw[thick] (root) -- (l); 
\draw[thick] (root) -- (r); 
\draw[thick] (l) -- (ll); 
\draw[thick] (l) -- (lr); 
\draw[thick] (r) -- (rl); 
\draw[thick] (r) -- (rr); 
\draw[thick] (ll) -- (lll); 
\draw[thick] (ll) -- (llr); 
\draw[thick] (rl) -- (rll); 
\draw[thick] (rl) -- (rlr); 

\end{tikzpicture}
}
      \end{center}
      \end{subfigure}
\caption{The binary tree corresponding to a unimodular refinement of a
  unimodular cone.\label{fig:bintree}}
\end{figure}

\begin{lemma}\label{trees}
  There is a bijection between stellar subdivisions of a unimodular
  cone with $k-1$ interior rays and binary trees with $k$ leaves.
\qed
\end{lemma}

With this we can prove Theorem~\ref{thm:fewpolygons}:

\begin{proof}[Proof of Theorem~\ref{thm:fewpolygons}]
Apart from the case of the fan of a unimodular triangle, we need to
count how many refinements there are of $\F_a$ with $k$ rays in total,
and then how many ways to choose the coefficients $\lambda_v$ in such
a way that $\sum \lambda_v v=0$ and $\sum \lambda_v=n$. To bound this
number, we combine the four binary trees that refine the four cones of
$\F_a$ into a single tree with $k$ leaves, as shown in
Figure~\ref{fig:combinedtree}. The number of ways of doing this is
clearly smaller than $C_{k-1}\le 4^k/k$.
Observe that we are
over-counting for several reasons: first, the trees we get are only
those that have no leaf at depth $1$. Second, in the case $a>0$ we
actually only need two binary trees, not four (put differently, the
trees labeled $\beta$ and $\gamma$ in Figure~\ref{fig:combinedtree}
are empty). Third, in the case $a=0$ there may be several copies of
$\F_0$ in the fan $\Sigma$, which means there are different binary
trees giving the same $\Sigma$.

We need to count the number of choices for the $\lambda$'s. We can
bound this by the number of ways of partitioning $n$ into $k$ positive
summands $\lambda_1+\lambda_2+\cdots+\lambda_n$, which equals
$\binom{n-1}{k-1}\le \binom{n}{ k}$. Again, this is an overcount,
because we do not care about the condition $\sum \lambda_v v=0$.
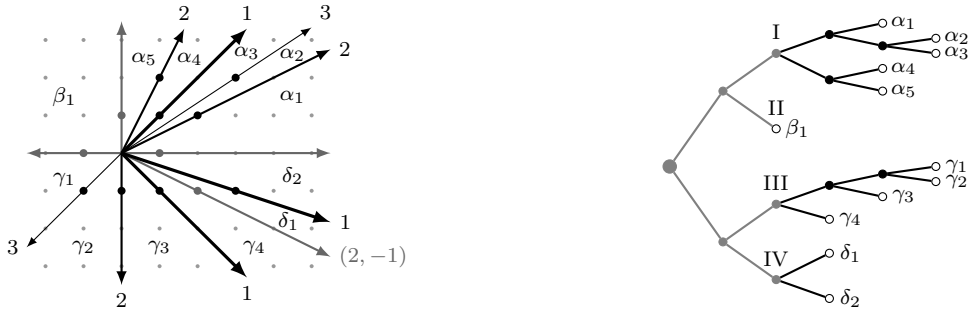
\begin{figure}[tbp]
   \begin{center}
      \begin{subfigure}[b]{0.52\textwidth}
         \begin{center}
{\tiny
\begin{tikzpicture}[scale=0.50]

\foreach \x in {-2,...,5} {
\foreach \y in {-3,...,3} {
\fill[black!40] (\x,\y) circle (1.5pt);
} }

\draw[thick,black!60,arrows={-latex}] (0,0) -- (5.5,0);
\draw[thick,black!60,arrows={-latex}] (0,0) -- (0,3.5);
\draw[thick,black!60,arrows={-latex}] (0,0) -- (-2.5,0);
\fill[thick,black!60] (2,-1) circle (3.0pt);
\draw[thick,black!60,arrows={-latex}] (0,0) -- (5.5,-2.75) node[right] {$(2,-1)$};
\fill[thick,black!60] (1,0) circle (3.0pt);
\fill[thick,black!60] (0,1) circle (3.0pt);
\fill[thick,black!60] (-1,0) circle (3.0pt);

\fill[black] (1,2) circle (3.0pt);
\draw[black,arrows={-latex},thick] (0,0) -- (1.65,3.3) node[above] {2};
\fill[black] (1,1) circle (3.0pt);
\draw[black,arrows={-latex},very thick] (0,0) -- (3.3,3.3) node[above] {1};
\fill[black] (3,2) circle (3.0pt);
\draw[black,arrows={-latex}] (0,0) -- (5.0,3.333) node[above right] {3};
\fill[black] (2,1) circle (3.0pt);
\draw[black,arrows={-latex},thick] (0,0) -- (5.5,2.75) node[right] {2};

\node at (0.60,2.50) {$\alpha_5$};
\node at (1.80,2.50) {$\alpha_4$};
\node at (3.30,2.65) {$\alpha_3$};
\node at (4.50,2.60) {$\alpha_2$};
\node at (4.50,1.50) {$\alpha_1$};

\fill[black] (1,-1) circle (3.0pt);
\draw[black,arrows={-latex},very thick] (0,0) -- (3.3,-3.3) node[below] {1};

\node at (-1.50,1.50) {$\beta_1$};

\fill[black] (3,-1) circle (3.0pt);
\draw[black,arrows={-latex},very thick] (0,0) -- (5.5,-1.833) node[right] {1};
\fill[black] (0,-1) circle (3.0pt);
\draw[black,arrows={-latex},thick] (0,0) -- (0,-3.5) node[below] {2};
\fill[black] (-1,-1) circle (3.0pt);
\draw[black,arrows={-latex}] (0,0) -- (-2.5,-2.5) node[left] {3};

\node at (-1.50,-0.70) {$\gamma_1$};
\node at (-1.00,-2.50) {$\gamma_2$};
\node at (1.00,-2.50) {$\gamma_3$};
\node at (3.50,-2.50) {$\gamma_4$};
\node at (4.40,-1.80) {$\delta_1$};
\node at (4.50,-0.60) {$\delta_2$};

%

\end{tikzpicture}
}
      \end{center}
      \end{subfigure}~
      \begin{subfigure}[b]{0.47\textwidth}
      \begin{center}
{\tiny
\begin{tikzpicture}[dotnode/.style={circle,draw=black,fill=none,inner sep=0pt,minimum size=3pt},node distance=8pt,yscale=0.5,xscale=0.7]

\node[dotnode,minimum size=5pt,fill=black!50,draw=black!50] (root) at (0,0) {};
\node[dotnode,fill=black!50,draw=black!50] (T) at (1.0,2.0) {};
\node[dotnode,fill=black!50,draw=black!50] (B) at (1.0,-2.0) {};

\node[dotnode,fill=black!50,draw=black!50] (I) at (2.0,3.0) {};
\node[dotnode] (II) at (2.0,1.0) {};
\node[dotnode,fill=black!50,draw=black!50] (III) at (2.0,-1.0) {};
\node[dotnode,fill=black!50,draw=black!50] (IV) at (2.0,-3.0) {};

\node[above of=I  ] {I  };
\node[above of=II ] {II };
\node[above of=III] {III};
\node[above of=IV ] {IV };

\node[right of=II] {$\beta_1$};

\draw[thick,black!50] (root) -- (T);
\draw[thick,black!50] (root) -- (B);
\draw[thick,black!50] (T) -- (I);
\draw[thick,black!50] (T) -- (II);
\draw[thick,black!50] (B) -- (III);
\draw[thick,black!50] (B) -- (IV);

\node[dotnode,fill] (Ir) at (3,3.5) {};
\node[dotnode] (Irr) at (4,3.8) {};
\node[dotnode,fill] (Irl) at (4,3.2) {};
\node[dotnode] (Irlr) at (5,3.4) {};
\node[dotnode] (Irll) at (5,3.0) {};
\node[dotnode,fill] (Il) at (3,2.3) {};
\node[dotnode] (Ilr) at (4,2.6) {};
\node[dotnode] (Ill) at (4,2.0) {};

\draw[thick] (I) -- (Ir); 
\draw[thick] (I) -- (Il); 
\draw[thick] (Ir) -- (Irr); 
\draw[thick] (Ir) -- (Irl); 
\draw[thick] (Irl) -- (Irlr); 
\draw[thick] (Irl) -- (Irll); 
\draw[thick] (Il) -- (Ilr); 
\draw[thick] (Il) -- (Ill); 

\node[right of=Irr] {$\alpha_1$};
\node[right of=Irlr] {$\alpha_2$};
\node[right of=Irll] {$\alpha_3$};
\node[right of=Ilr] {$\alpha_4$};
\node[right of=Ill] {$\alpha_5$};

\node[dotnode,fill] (IIIl) at (3,-0.5) {};
\node[dotnode,fill] (IIIll) at (4,-0.2) {};
\node[dotnode] (IIIlll) at (5,-0.0) {};
\node[dotnode] (IIIllr) at (5,-0.4) {};
\node[dotnode] (IIIlr) at (4,-0.8) {};
\node[dotnode] (IIIr) at (3,-1.4) {};

\draw[thick] (III) -- (IIIl); 
\draw[thick] (III) -- (IIIr); 
\draw[thick] (IIIl) -- (IIIll); 
\draw[thick] (IIIl) -- (IIIlr); 
\draw[thick] (IIIll) -- (IIIlll); 
\draw[thick] (IIIll) -- (IIIllr); 

\node[right of=IIIlll] {$\gamma_1$};
\node[right of=IIIllr] {$\gamma_2$};
\node[right of=IIIlr] {$\gamma_3$};
\node[right of=IIIr] {$\gamma_4$};

\node[dotnode] (IVl) at (3,-2.3) {};
\node[dotnode] (IVr) at (3,-3.5) {};

\draw[thick] (IV) -- (IVl); 
\draw[thick] (IV) -- (IVr); 

\node[right of=IVl] {$\delta_1$};
\node[right of=IVr] {$\delta_2$};

\end{tikzpicture}
}
      \end{center}
      \end{subfigure}
      \caption{A subdivision of $\F_2$ and the corresponding binary
        tree\label{fig:combinedtree}}
      \end{center}
\end{figure}
So, we get a bound of $\binom{n}{ k}4^k/k$ for the number of polygons
that come from a given $\F_a$. Since by Lemma~\ref{lem:Hirzebruch} $a<
n$, multiplying that bound for $n$ gives a global bound.
\end{proof}

In order to work towards the proof of Theorem~\ref{thm:smallpolygons},
we first show an example that illustrates two points. On the one hand
it shows that the upper bound given is not that bad; more precisely,
it shows that the maximum area of a smooth $k$-gon with $n$ boundary
points lies in $2^{\Theta(k)}n^{\Theta(1)}$.
On the other hand, it shows where the golden ratio in the statement
comes from.

On the other end of the range, Imre B\'ar\'any and Norihide
Tokushige \cite[Remark~2]{BaranyTokushige} constructed smooth lattice
$n$-gons with area less than $n^3/54$.

\begin{example}[A smooth $k$-gon with area $\Omega(\phi^{2k/3})$]
\label{exm:fibonacci}
We start with the normal fan of a unimodular triangle, whose rays we
label as follows:
\[
a_0=c_1=(1,0),\qquad
b_0=a_1=(0,1),\qquad
c_0=b_1=(-1,-1).
\]
Starting with this fan, we refine the three cones in an iterative and
symmetric manner. More precisely, choose an integer $\ell\ge 2$ and
introduce:
\[
a_i=a_{i-1}+a_{i-2},\quad 
b_i=b_{i-1}+b_{i-2},\quad 
c_i=c_{i-1}+c_{i-2},\qquad \forall i=2,\dots, \ell.
\]
Since, by symmetry, the sum of these $k=3\ell$ vectors is zero, this
is the normal fan of a smooth polygon with all edges of lattice length
$1$. The (normalized) area of this polygon is at least the determinant
of any pair of rays in the fan, since the convex hull of the
corresponding edges contains a triangle with that area. Let us
compute, for example, $\det(a_\ell,b_\ell)$.
By construction we have 
\[
a_\ell =(F_{\ell-1},F_\ell), \quad 
b_\ell =(-F_{\ell },-F_{\ell -2}),
\]
where $F_i$ denotes the $i$-th Fibonacci number. That is, $F_0=0$,
$F_1=F_2=1$, $F_3=2$, $F_{i+1} = F_{i-1} + F_i$. Hence, the
determinant we are interested in equals
\[
F_{\ell }^2-F_{\ell -1}F_{\ell -2}\simeq c \phi^{2\ell } =c \phi^{2k/3}
\]
for a certain constant $c$.
Since the perimeter of the polygon is $O(F_\ell )$, this lower bound
gives the correct area, up to the value of $c$.
\end{example}

\begin{proof}[Proof of Theorem~\ref{thm:smallpolygons}]
If the normal fan of $P$ is
the fan of $\P^2$, $P$ is a unimodular triangle dilated by
a factor of $n/3$, so its area is $n^2/9$.
Thus, assume that the normal fan $\Sigma$ of $P$ is a refinement of
the Hirzebruch fan $\F_a$ and, as in Lemma~\ref{lem:Hirzebruch},
assume that $a\ge 0$ is minimal with that property.
If $a\ne 0$, then the three rays $v_2=(0,1)$, $v_3=(-1,0)$ and
$v_4=(a,-1)$ are consecutive in the normal fan of $P$. Let $e_2$,
$e_3$ and $e_4$ be the corresponding edges. Then $P$ is inscribed in
the triangle with base $e_3$ and third vertex in the intersection of
the lines containing $e_2$ and $e_4$. That triangle (which may not be
a lattice triangle) has normalized area $l^2/(2a)\le n^2$, where $l\le
n$ is the length of $e_3$.

So, for the rest of the proof we assume $a=0$; that is, $\Sigma$
refines the fan $\F_0$ of $\P^1\times \P^1$. Let $k_1$, $k_2$, $k_3$
and $k_4$ denote the number of unimodular cones in $\Sigma$ that
refine the four cones of $\F_0$. 

The crucial observation is that, as in Example~\ref{exm:fibonacci},
in each quadrant, the $i$-th vector introduced by the refinement
process is bounded from above by the $i$-th Fibonacci number $F_i$
in each coordinate.
Here, as in Example~\ref{exm:fibonacci}, we reserve the
indices $i=0$ and $i=1$ for the two boundary primitive vectors in the
quadrant, so that the first vector refining the quadrant has $i=2$.
In particular, every coordinate of every ray is bounded above by
$F_{k-3}$, since $k=k_1+k_2+k_3+k_4$. On the other hand, the polygon
$P$ is contained in the zonotope obtained as the Minkowski sum of its
edges, and the (normalized) area of that zonotope is the sum of the
absolute values of the determinants of all pairs of rays in the fan,
where each ray is counted with a multiplicity equal to the length of
the corresponding edge~\cite[Ex.~7.19]{MR1311028}. The stated bound
then follows from these facts:
\begin{itemize}
\item The absolute value of each such determinant is bounded above by
  $2F_{k-3}^2$, which is smaller than $\phi^{2k}$.
\item The number of subdeterminants (counting rays with
  multiplicity) is bounded above by $\binom{n}{ 2}< n^2$.
   \qedhere
\end{itemize}
\end{proof}

\begin{remark}
We believe $n^2\phi^{2k/3}$ to be also an upper bound for the area,
which means that the construction of Example~\ref{exm:fibonacci} is
optimal, modulo a constant factor. The reason for this is that in
order to get the vectors in $\Sigma$ to sum up to zero (when counted
with multiplicity) we need to either have extremely high
multiplicities in some of them (making $n$ exponentially big) or have
at least two of the four cones of $\F_0$ be refined in basically the
same way (making the Fibonacci numbers involved bounded by $F_{k/2}$
rather than $F_k$). But if only two (opposite) cones of $\F_0$ have
this property then the Fibonacci-long vectors obtained will be almost
opposite, making the area small. Three of the cones need to have
vectors with big entries with respect to the basis of the starting
$\F_0$, which should give the bound of $\phi^{2k/3}$.
\end{remark}

\section{Classification in Dimension $3$}
\label{sec:classification}

This section summarizes the strategy to classify smooth $3$-polytopes
with at most $12$ lattice points. 
We don't follow the proof of Corollary~\ref{coro:smoothembeddings}
directly but use a modified strategy.
For full details, including source code,
see~\cite{LorenzDiplom,LorenzICMS}. In subsequent work, Anders
Lundman has extended this classification to $16$ lattice
points~\cite{LundmanMSc}.

\subsection{Generating Normal Fans}
Katsuya Miyake and Tadao Oda classified smooth $3$-dimensional
fans which are minimal with respect to equivariant
blow-ups~\cite[Theorem~1.34]{Oda}. This classification goes up to at
most eight rays or equivalently, $12$ full-dimensional cones.
Starting from this list, all possible sequences of blow-ups had to be
enumerated until no fan of a polytope with $\le 12$ lattice points
could occur further down the search tree.
In order to prune the search tree, we used bounds based on the
two-dimensional classification.

\subsection{Generating Polytopes}
The next step is to find the polytopes corresponding to ample
divisors, given the normal fan $\Sigma$. Let $\Sigma(1)$  denote the
set of rays in $\Sigma$, and for $b\in \R^{|\Sigma(1)|}$, we let
$$P_b = \{u \mid  \langle u, v_{\rho} \rangle \leq b_{\rho}\},$$  
where $v_{\rho}$ is the primitive generator of the ray $\rho \in
\Sigma(1)$. Note that for $b\in \Z^{|\Sigma(1)|}$, $P_b$ is the
lattice polytope corresponding to the torus invariant prime divisor
$D_b:= \sum_{\rho \in \Sigma(1)} b_{\rho} D_{\rho}$. For a curve $C$
on $X$, the function $\mathrm{Div}(X)_{\R} \to \R \colon  D \mapsto D
\cdot C$ is linear.
On a toric variety $D$ is ample if and only if $D \cdot C >0$ for all
torus invariant curves $C$, so these inequalities cut out the preimage
of the ample cone in $\mathrm{Div}(X)_{\R}$. This preimage consists of
the vectors $b$ such that the normal fan to $P_b$ is $\Sigma$. 
Note that when $D_b$ is ample, then $D_b\cdot C$ is the lattice length
of the edge of $P_b$ corresponding to the $(n-1)$-dimensional cone in
$\Sigma$ corresponding to  $C$, see \cite[1.4]{Laterveer96}.

Bounding the sum of the edge lengths $d(\mathcal{O}(D))$ as a lower
bound for the total number of lattice points, the search space of
possible $b$-vectors which yield at most $\nbr$ lattice points becomes
itself the set of lattice points in a polytope.

The last step is to remove all polytopes that are integrally equivalent
to another one in the list. 

All these computations can be done with the \polymake lattice polytope
package by Benjamin Lorenz, Andreas Paffenholz and Michael
Joswig~\cite{polymake,polymake-paper,polymakeLattice} using interfaces to \fourtitwo
by the 4ti2 team~\cite{4ti2}, \lattE by Jes\'us De Loera
et al.~\cite{latte,LattEsoftware} and \normaliz by Winfried Bruns et
al.~\cite{normaliz,normaliz2}.

\subsection{Classification Results}
\begin{theorem}
There are 41 equivalence classes of smooth lattice polygons with at
most 12 lattice points.
\begin{center}
\begin{tabular}{r|rrrrrrrrrrrrr}
Vertices	& $3$ &  $4$ & $5$ & $6$ & $7$ & $8$ & $9$ & $10$ & $11$ & $12$ \\
\hline
Polygons	& $3$ & $30$ & $3$ & $4$ & $0$ & $1$ & $0$ & $0$ & $0$ & $0$ \\
\end{tabular}
\end{center}
\end{theorem}

\begin{theorem}
There are 33 equivalence classes of smooth 3-di\-men\-sio\-nal
lattice polytopes with at most 12 lattice points.
\begin{center}
\begin{tabular}{r|rrrrr}
Vertices 		& $4$ & $6$ & $8$ & $10$ & $12$\\
\hline
Polytopes 			& $2$ & $25$ & $6$ & $0$ & $0$ \\
\end{tabular}
\end{center}
\end{theorem}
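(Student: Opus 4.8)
The plan is to convert the finiteness already guaranteed by Corollary~\ref{maincor} into an exhaustive but provably complete computation, following the algorithm sketched in Section~\ref{sec:classification}. Since we know a priori that only finitely many equivalence classes exist, the entire content of the theorem is an enumeration that omits nothing. I would organize it in two stages: first produce a finite, explicit list of candidate normal fans; then, for each fan, determine all smooth polytopes realizing it with at most $12$ lattice points; and finally discard duplicates under integral equivalence (Definition~\ref{equivalence}) to obtain the stated count.

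For the first stage I would start from Oda and Miyake's classification of the smooth complete $3$-dimensional fans that are minimal with respect to equivariant blow-ups (stellar subdivisions), which have at most eight rays. By the toric dictionary every candidate normal fan arises from one of these minimal fans by a sequence of stellar subdivisions, so I would enumerate those sequences as a search tree. The essential ingredient is a sound termination and pruning criterion: each subdivision adds a ray, equivalently a facet of the polytope, and I would use the two-dimensional bounds of Lemma~\ref{two}, together with Theorem~\ref{edge}, to argue that once a fan forces too many lattice points to appear on the $2$-faces, no further subdivision can produce a polytope with $\le 12$ lattice points. This bounds the depth of the tree and makes the list of candidate fans finite and explicit.

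For the second stage, fix a candidate fan $\Sigma$, write its primitive ray generators as the rows of a matrix $A$, and describe the polytopes as $P(A,b) = \{x \mid Ax \le b\}$. On the chamber of right-hand sides $b$ for which $P(A,b)$ has normal fan $\Sigma$ (the nef cone), the edge lengths are linear functionals of $b$; requiring these to be positive cuts out the chamber, and bounding the sum of edge lengths so that the lattice-point count stays $\le 12$ intersects it with a bounded region. The admissible $b$ are then exactly the lattice points of a polytope, which I would enumerate using the \polymake lattice package with its interfaces to \fourtitwo, \lattE, and \normaliz, counting the lattice points of each resulting $P(A,b)$ to confirm the bound.

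The main obstacle is completeness rather than the per-fan computation: I must guarantee that pruning a branch of the blow-up tree never discards a fan that supports a valid polytope. This reduces to a monotonicity statement, namely that refining the fan can only increase the number of lattice points forced along the $2$-faces, made quantitative through Lemma~\ref{two}. Once this soundness is established the search terminates with a finite candidate list, and the final reduction modulo integral equivalence, removing all lattice-isomorphic duplicates, yields the asserted $33$ equivalence classes together with the vertex distribution recorded in the table.
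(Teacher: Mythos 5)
Your proposal is correct and follows essentially the same route as the paper: starting from the Miyake--Oda list of minimal smooth $3$-fans, enumerating blow-up sequences with pruning based on the two-dimensional bounds, then for each candidate fan $\Sigma$ enumerating right-hand sides $b$ as lattice points of a polytope cut out by the nef-cone and edge-length conditions, and finally removing lattice-isomorphic duplicates. The paper's own argument is exactly this computer-assisted algorithm (with details deferred to Lorenz's Diplomarbeit), so your plan, including the soundness requirement for pruning the search tree, matches it step for step.
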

Note that a short parity argument shows that every \emph{simple} (and
hence every smooth) 3-polytope has an even number of vertices. 
Lists of all smooth polygons and smooth 3-polytopes with at most 12
lattice points can be found in the appendix.

\subsection{Comments} 
We now have a list of smooth lattice polytopes in dimensions two and 
three with at most $12$ lattice points. The bound $12$ may seem rather
low -- the smallest smooth 3-polytope with one interior lattice point
has $21$ lattice points total~\cite{KasprzykCanonical}. 
The classification carried out here serves as a proof of concept -- it
can be done. There are several points in the algorithm where it could
be improved (compare~\cite{LundmanMSc}).

In the current implementation, the generation of the normal fans is
the bottleneck. By implementing a different way to directly generate
all smooth normal fans one could skip the big recursion of calculating
all blowing-ups, as well as overcome the limits of at most $12$
vertices imposed by the Miyake/Oda classification.
The second point to work on is the calculation of lattice points of
the polytope containing all right-hand sides $b$. The dimension
of this polytope is equal to the Picard number of the toric variety:
the number of rays of the fan minus the ambient dimension.
Of course, better theoretical bounds for all steps of the algorithm
will directly improve the performance.

\subsection{Conjectures on smooth toric varieties}\label{sec:conjectures}

There is an entire hierarchy of successively stronger conjectures
concerning embeddings of smooth projective toric varieties which are
open even in dimension $3$,  (compare~\cite[p.~2313]{MFO07}). The
weakest conjecture is Oda's question whether every smooth lattice
polytope is \emph{integrally closed}, i.e., every lattice point in
$mP$ can be written as a sum of  $m$ lattice points in $P$.
The principal obstacle to theoretical progress on Oda's question on 
normality
and the related conjectures is a serious lack of well understood
examples. Recently, Gubeladze~\cite{GubeladzeLongEdges} has shown that
any lattice polytope with sufficiently long edges (depending on the
dimension) gives rise to a projectively normal embedding.
In view of this result, if there exists a counterexample, it is more
likely to be a small polytope. Yet, all polytopes in our
classification up to $12$ lattice points satisfy even the strongest of
these conjectures (see Corollary~\ref{cor:GB}).
In particular, the homogeneous coordinate ring is a Koszul algebra.

The following proposition shows that Oda's question implies
Theorem~\ref{thm:Qfactorialembeddings} for smooth toric varieties.

\begin{proposition}
There are only finitely many integrally closed lattice  polytopes $P$
with $\nbr$ lattice points.
\end{proposition}
\begin{proof}
  If $P$ is normal, then the semigroup in $M\times \Z$ generated by
  $(u,1)$, where $u$ is a lattice point in $P$, is normal. 
  This implies that the associated semigroup algebra is integrally
  closed and thus  a Cohen-Macaulay standard graded
  algebra~\cite{HochsterCM} with $\le \nbr$ generators. Thus, the
  coefficients of its Hilbert function (the Ehrhart polynomial of $P$)
  are bounded (compare, e.g.~\cite[Lemma~18.1]{HibiRedBook}).
  This bounds the degree (the normalized volume of $P$).
  By Theorem~\ref{thm:LZ}, there are only finitely many such $P$.
\end{proof}

Furthermore, using our classification, we were able to confirm the
strongest conjecture for smooth polytopes with at most 12 lattice
points.
\begin{theorem} \label{thm:GB}
  If $P$ is a 3-dimensional smooth polytope with at most $12$ lattice
  points, then $P$ has a regular unimodular triangulation with minimal
  non-faces of size two.
\end{theorem}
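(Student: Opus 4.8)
The plan is to deduce the statement directly from the explicit classification. By the classification of smooth $3$-polytopes with at most $12$ lattice points, there is a finite list (the $33$ polytopes recorded in the appendix), so it suffices to establish the claim for each polytope $P$ in this list separately.

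For a fixed $P$, I would take $A = P \cap \Z^3$ as the prospective vertex set and search for a generic height function $w \colon A \to \R$, forming the associated regular triangulation $T_w$ obtained by projecting the lower hull of the lifted points $\{(a, w(a)) : a \in A\}$. Such a $T_w$ is regular by construction and, for generic $w$, is a genuine triangulation using all of $A$. I would then verify two further properties: \emph{unimodularity}, that every full-dimensional simplex of $T_w$ has normalized volume $1$ (equivalently, the three edge vectors at any vertex of the simplex form a basis of $\Z^3$); and the \emph{flag property}, that every minimal non-face of the abstract simplicial complex $T_w$ has exactly two elements. The latter is convenient to check, since a complex is flag precisely when every set of pairwise-adjacent vertices already spans a face; so one confirms that each clique in the edge graph of $T_w$ is a face of $T_w$.

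Concretely, these computations are carried out in \polymake: for each of the $33$ polytopes one exhibits a single height function whose regular triangulation is simultaneously unimodular and flag. Since the list is finite and each $P$ has at most $12$ lattice points, the underlying search terminates. Note that once such a triangulation is found, the corresponding weight order gives a squarefree quadratic initial ideal of the toric ideal of $P$, which is the reason this single property implies all the weaker conjectures discussed in the introduction.

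The hard part is that none of the three demands is automatic, and they interact. Unlike the planar case, where Pick's formula forces every full lattice triangulation to be unimodular, a full triangulation of a lattice $3$-polytope may contain non-unimodular tetrahedra; and regularity together with the flag property impose additional, a priori competing, constraints. Thus the real content is not any single property but the existence, for every polytope in the list, of one height function meeting all three at once --- and it is precisely this simultaneous compatibility that must be checked polytope by polytope.
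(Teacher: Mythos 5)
Your proposal takes essentially the same route as the paper: the paper's entire proof of this theorem is the remark that ``one easily proves by inspection of the list in the appendix'' that all 33 classified polytopes satisfy the conjectures, i.e.\ a finite, polytope-by-polytope computational verification using the \polymake-based classification, which is exactly what you describe. Your elaboration---choosing a height function, forming the regular triangulation from the lower hull, and checking unimodularity and flagness (cliques of the edge graph are faces) for each of the 33 polytopes---is just the explicit implementation of that inspection, so there is no substantive difference in approach.
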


\begin{corollary} \label{cor:GB}
  Let $X$ be a smooth toric threefold embedded in $\P^{\le 11}$ using
  a complete linear series. Then the defining ideal of $X$ has an
  initial ideal generated by square-free quadratic monomials.
\end{corollary}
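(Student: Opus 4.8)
The plan is to translate the statement into commutative algebra through the dictionary of Section~\ref{preliminaries} and then apply the standard correspondence, due to Sturmfels, between regular triangulations of a lattice point configuration and initial ideals of its toric ideal.

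First I would set up the translation. The embedding of the (smooth) toric threefold $X$ into $\P^N$ with $N\le 11$ by a complete linear series corresponds, under the dictionary of Section~\ref{preliminaries}, to a lattice $3$-polytope $P$ with $N+1\le 12$ lattice points; since $X$ is smooth, $P$ is a smooth polytope (Definition~\ref{smooth}). The homogeneous coordinates of $\P^N$ are indexed by the configuration $A:=P\cap\Z^3$, and the defining ideal of $X\subseteq\P^N$ is the toric ideal $I_A\subseteq k[x_a\mid a\in A]$. Because $P$ is a smooth $3$-polytope with at most $12$ lattice points, Theorem~\ref{thm:GB} applies and yields a regular unimodular triangulation $\Delta$ of $P$ all of whose minimal non-faces have size two.

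Next I would invoke Sturmfels' correspondence. A regular triangulation $\Delta$ of $A$ is induced by a weight vector $w$, and the radical of the initial ideal $\mathrm{in}_w(I_A)$ equals the Stanley--Reisner ideal $I_\Delta$; moreover $\Delta$ is unimodular if and only if $\mathrm{in}_w(I_A)$ is itself square-free, in which case $\mathrm{in}_w(I_A)=I_\Delta$. Applying this to our $\Delta$, the initial ideal $\mathrm{in}_w(I_A)$ is square-free and coincides with $I_\Delta$. Here one should note that a unimodular triangulation automatically has every lattice point of $P$ as a vertex, so the generators of $I_\Delta$ really are monomials in the variables $x_a$, $a\in A$, matching the homogeneous coordinates of the given embedding.

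Finally, the hypothesis that all minimal non-faces of $\Delta$ have size two says precisely that $I_\Delta$ is generated by the square-free quadratic monomials $x_ax_{a'}$ indexed by the minimal non-faces $\{a,a'\}$. Hence $\mathrm{in}_w(I_A)$ is generated by square-free quadratic monomials, which is the assertion of Corollary~\ref{cor:GB}. There is little genuine obstacle here beyond Theorem~\ref{thm:GB} itself, which is assumed; the only points requiring care are to use the correct direction of Sturmfels' theorem (unimodularity of $\Delta$ forces $\mathrm{in}_w(I_A)$ to be radical, so that it equals $I_\Delta$ rather than merely sharing its radical) and to confirm that the triangulation uses all of $A$ as vertices, so that its Stanley--Reisner ideal lives in the coordinate ring of the embedding.
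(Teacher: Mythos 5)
Your proposal is correct and is exactly the argument the paper intends: the paper states Corollary~\ref{cor:GB} as an immediate consequence of Theorem~\ref{thm:GB} via the standard Sturmfels correspondence between regular unimodular triangulations of $P\cap\Z^3$ and square-free initial ideals of the toric ideal, with minimal non-faces of size two giving quadratic generators. Your care about the two subtle points (unimodularity forcing $\mathrm{in}_w(I_A)$ to equal the Stanley--Reisner ideal rather than merely share its radical, and all lattice points appearing as vertices) is exactly what makes the translation work.
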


\bibliographystyle{alpha}
\bibliography{3dFiniteness}
\label{sec:biblio}

\clearpage

\appendix
{\tiny
\section*{List of Smooth Polygons with $\leq 12$ Lattice Points}

\begin{tabular}{llllll}

\begin{tabular}{@{}ll@{}}
\multicolumn{2}{@{}l@{}}{
\begin{tikzpicture}[scale=0.4]
\foreach \x in {0,...,2}
  \foreach \y in {0,...,2}
    \fill[black!60] (\x,\y) circle (2pt);
\draw[black,->] (0,0) -- (3,0);
\draw[black,->] (0,0) -- (0,3);
\fill (2,0) circle (2pt);
\fill (0,0) circle (2pt);
\fill (0,2) circle (2pt);
\fill (2,2) circle (2pt);
\draw[black,thick] (0,0) -- (0,2);
\draw[black,thick] (2,0) -- (0,0);
\draw[black,thick] (0,2) -- (2,2);
\draw[black,thick] (2,0) -- (2,2);
\end{tikzpicture}
}
\\
(2,0)&(0,0)\\ 
(0,2)&(2,2)\\ 
\end{tabular}
& 
\begin{tabular}{@{}ll@{}}
\multicolumn{2}{@{}l@{}}{
\begin{tikzpicture}[scale=0.4]
\foreach \x in {0,...,3}
  \foreach \y in {0,...,2}
    \fill[black!60] (\x,\y) circle (2pt);
\draw[black,->] (0,0) -- (4,0);
\draw[black,->] (0,0) -- (0,3);
\fill (0,2) circle (2pt);
\fill (0,0) circle (2pt);
\fill (3,0) circle (2pt);
\fill (3,2) circle (2pt);
\draw[black,thick] (0,2) -- (0,0);
\draw[black,thick] (0,0) -- (3,0);
\draw[black,thick] (0,2) -- (3,2);
\draw[black,thick] (3,0) -- (3,2);
\end{tikzpicture}
}
\\
(0,2)&(0,0)\\ 
(3,0)&(3,2)\\ 
\end{tabular}
& 
\begin{tabular}{@{}ll@{}}
\multicolumn{2}{@{}l@{}}{
\begin{tikzpicture}[scale=0.4]
\foreach \x in {0,...,3}
  \foreach \y in {0,...,2}
    \fill[black!60] (\x,\y) circle (2pt);
\draw[black,->] (0,0) -- (4,0);
\draw[black,->] (0,0) -- (0,3);
\fill (0,0) circle (2pt);
\fill (1,0) circle (2pt);
\fill (3,2) circle (2pt);
\fill (0,2) circle (2pt);
\draw[black,thick] (0,0) -- (0,2);
\draw[black,thick] (0,0) -- (1,0);
\draw[black,thick] (3,2) -- (0,2);
\draw[black,thick] (1,0) -- (3,2);
\end{tikzpicture}
}
\\
(0,0)&(1,0)\\ 
(5,2)&(0,2)\\ 
\end{tabular}
& 
\begin{tabular}{@{}ll@{}}
\multicolumn{2}{@{}l@{}}{
\begin{tikzpicture}[scale=0.4]
\foreach \x in {0,...,4}
  \foreach \y in {0,...,2}
    \fill[black!60] (\x,\y) circle (2pt);
\draw[black,->] (0,0) -- (5,0);
\draw[black,->] (0,0) -- (0,3);
\fill (0,0) circle (2pt);
\fill (2,0) circle (2pt);
\fill (4,2) circle (2pt);
\fill (0,2) circle (2pt);
\draw[black,thick] (0,0) -- (0,2);
\draw[black,thick] (0,0) -- (2,0);
\draw[black,thick] (4,2) -- (0,2);
\draw[black,thick] (2,0) -- (4,2);
\end{tikzpicture}
}
\\
(0,0)&(1,0)\\ 
(5,2)&(0,2)\\ 
\end{tabular}
& 
\multicolumn{2}{l}{
\begin{tabular}{@{}ll@{}}
\multicolumn{2}{@{}l@{}}{
\begin{tikzpicture}[scale=0.4]
\foreach \x in {0,...,5}
  \foreach \y in {0,...,2}
    \fill[black!60] (\x,\y) circle (2pt);
\draw[black,->] (0,0) -- (6,0);
\draw[black,->] (0,0) -- (0,3);
\fill (0,0) circle (2pt);
\fill (1,0) circle (2pt);
\fill (5,2) circle (2pt);
\fill (0,2) circle (2pt);
\draw[black,thick] (0,0) -- (0,2);
\draw[black,thick] (0,0) -- (1,0);
\draw[black,thick] (5,2) -- (0,2);
\draw[black,thick] (1,0) -- (5,2);
\end{tikzpicture}
}
\\
(0,0)&(1,0)\\ 
(5,2)&(0,2)\\ 
\end{tabular}
}
\\

\begin{tabular}{@{}ll@{}}
\multicolumn{2}{@{}l@{}}{
\begin{tikzpicture}[scale=0.4]
\foreach \x in {0,...,1}
  \foreach \y in {0,...,1}
    \fill[black!60] (\x,\y) circle (2pt);
\draw[black,->] (0,0) -- (2,0);
\draw[black,->] (0,0) -- (0,2);
\fill (0,1) circle (2pt);
\fill (1,0) circle (2pt);
\fill (0,0) circle (2pt);
\draw[black,thick] (0,1) -- (0,0);
\draw[black,thick] (1,0) -- (0,0);
\draw[black,thick] (0,1) -- (1,0);
\end{tikzpicture}
}
\\
(0,1)&(1,0)\\ 
(0,0)&\\ 
\end{tabular}
& 
\begin{tabular}{@{}ll@{}}
\multicolumn{2}{@{}l@{}}{
\begin{tikzpicture}[scale=0.4]
\foreach \x in {0,...,2}
  \foreach \y in {0,...,2}
    \fill[black!60] (\x,\y) circle (2pt);
\draw[black,->] (0,0) -- (3,0);
\draw[black,->] (0,0) -- (0,3);
\fill (0,0) circle (2pt);
\fill (2,0) circle (2pt);
\fill (0,2) circle (2pt);
\draw[black,thick] (0,0) -- (0,2);
\draw[black,thick] (0,0) -- (2,0);
\draw[black,thick] (2,0) -- (0,2);
\end{tikzpicture}
}
\\
(0,0)&(2,0)\\ 
(0,2)&\\ 
\end{tabular}
&
\begin{tabular}{@{}ll@{}}
\multicolumn{2}{@{}l@{}}{
\begin{tikzpicture}[scale=0.4]
\foreach \x in {0,...,3}
  \foreach \y in {0,...,3}
    \fill[black!60] (\x,\y) circle (2pt);
\draw[black,->] (0,0) -- (4,0);
\draw[black,->] (0,0) -- (0,4);
\fill (0,0) circle (2pt);
\fill (3,0) circle (2pt);
\fill (0,3) circle (2pt);
\draw[black,thick] (0,0) -- (0,3);
\draw[black,thick] (0,0) -- (3,0);
\draw[black,thick] (3,0) -- (0,3);
\end{tikzpicture}
}
\\
(0,0)&(3,0)\\ 
(0,3)&\\ 
\end{tabular}
& 


\begin{tabular}{@{}ll@{}}
\multicolumn{2}{@{}l@{}}{
\begin{tikzpicture}[scale=0.4]
\foreach \x in {0,...,2}
  \foreach \y in {0,...,2}
    \fill[black!60] (\x,\y) circle (2pt);
\draw[black,->] (0,0) -- (3,0);
\draw[black,->] (0,0) -- (0,3);
\fill (2,0) circle (2pt);
\fill (0,0) circle (2pt);
\fill (0,2) circle (2pt);
\fill (2,2) circle (2pt);
\draw[black,thick] (0,0) -- (0,2);
\draw[black,thick] (1,0) -- (0,0);
\draw[black,thick] (0,2) -- (2,2);
\draw[black,thick] (2,1) -- (2,2);
\draw[black,thick] (2,1) -- (1,0);
\end{tikzpicture}
}
\\
(1,0)&(0,0)\\ 
(0,2)&(2,1)\\
(2,2)\\ 
\end{tabular}
& 
\begin{tabular}{@{}ll@{}}
\multicolumn{2}{@{}l@{}}{
\begin{tikzpicture}[scale=0.4]
\foreach \x in {0,...,3}
  \foreach \y in {0,...,2}
    \fill[black!60] (\x,\y) circle (2pt);
\draw[black,->] (0,0) -- (3,0);
\draw[black,->] (0,0) -- (0,3);
\fill (0,2) circle (2pt);
\fill (0,0) circle (2pt);
\fill (3,0) circle (2pt);
\fill (3,2) circle (2pt);
\draw[black,thick] (0,2) -- (0,0);
\draw[black,thick] (0,0) -- (2,0);
\draw[black,thick] (0,2) -- (3,2);
\draw[black,thick] (3,1) -- (3,2);
\draw[black,thick] (3,1) -- (2,0);
\end{tikzpicture}
}
\\
(0,2)&(0,0)\\ 
(3,1)&(3,2)\\
(2,0)\\ 
\end{tabular}
&
\begin{tabular}{@{}ll@{}}
\multicolumn{2}{@{}l@{}}{
\begin{tikzpicture}[scale=0.4]
\foreach \x in {0,...,4}
  \foreach \y in {0,...,2}
    \fill[black!60] (\x,\y) circle (2pt);
\draw[black,->] (0,0) -- (4,0);
\draw[black,->] (0,0) -- (0,3);
\fill (1,0) circle (2pt);
\fill (3,1) circle (2pt);
\fill (0,0) circle (2pt);
\fill (0,2) circle (2pt);
\fill (4,2) circle (2pt);
\draw[black,thick] (0,0) -- (0,2);
\draw[black,thick] (3,1) -- (4,2);
\draw[black,thick] (0,2) -- (4,2);
\draw[black,thick] (1,0) -- (0,0);
\draw[black,thick] (1,0) -- (3,1);
\end{tikzpicture}
}
\\
(1,1)&(3,0)\\ 
(0,2)&(0,4)\\ 
(4,0)&\\ 
\end{tabular}
\\

\begin{tabular}{@{}ll@{}}
\multicolumn{2}{@{}l@{}}{
\begin{tikzpicture}[scale=0.4]
\foreach \x in {0,...,2}
  \foreach \y in {0,...,2}
    \fill[black!60] (\x,\y) circle (2pt);
\draw[black,->] (0,0) -- (3,0);
\draw[black,->] (0,0) -- (0,3);
\fill (0,2) circle (2pt);
\fill (1,0) circle (2pt);
\fill (0,1) circle (2pt);
\fill (2,0) circle (2pt);
\fill (2,1) circle (2pt);
\fill (1,2) circle (2pt);
\draw[black,thick] (0,2) -- (0,1);
\draw[black,thick] (1,0) -- (2,0);
\draw[black,thick] (2,1) -- (1,2);
\draw[black,thick] (1,0) -- (0,1);
\draw[black,thick] (2,0) -- (2,1);
\draw[black,thick] (0,2) -- (1,2);
\end{tikzpicture}
}
\\
(0,2)&(1,0)\\ 
(0,1)&(2,0)\\ 
(2,1)&(1,2)\\ 
\end{tabular}
& 
\begin{tabular}{@{}ll@{}}
\multicolumn{2}{@{}l@{}}{
\begin{tikzpicture}[scale=0.4]
\foreach \x in {0,...,3}
  \foreach \y in {0,...,3}
    \fill[black!60] (\x,\y) circle (2pt);
\draw[black,->] (0,0) -- (4,0);
\draw[black,->] (0,0) -- (0,4);
\fill (0,3) circle (2pt);
\fill (2,0) circle (2pt);
\fill (0,2) circle (2pt);
\fill (3,0) circle (2pt);
\fill (3,1) circle (2pt);
\fill (1,3) circle (2pt);
\draw[black,thick] (0,3) -- (0,2);
\draw[black,thick] (2,0) -- (3,0);
\draw[black,thick] (3,1) -- (1,3);
\draw[black,thick] (2,0) -- (0,2);
\draw[black,thick] (3,0) -- (3,1);
\draw[black,thick] (0,3) -- (1,3);
\end{tikzpicture}
}
\\
(0,3)&(2,0)\\ 
(0,2)&(3,0)\\ 
(3,1)&(1,3)\\ 
\end{tabular}
& 
\begin{tabular}{@{}ll@{}}
\multicolumn{2}{@{}l@{}}{
\begin{tikzpicture}[scale=0.4]
\foreach \x in {0,...,3}
  \foreach \y in {0,...,3}
    \fill[black!60] (\x,\y) circle (2pt);
\draw[black,->] (0,0) -- (4,0);
\draw[black,->] (0,0) -- (0,4);
\fill (0,3) circle (2pt);
\fill (1,0) circle (2pt);
\fill (0,1) circle (2pt);
\fill (3,0) circle (2pt);
\fill (3,1) circle (2pt);
\fill (1,3) circle (2pt);
\draw[black,thick] (0,3) -- (0,1);
\draw[black,thick] (1,0) -- (3,0);
\draw[black,thick] (3,1) -- (1,3);
\draw[black,thick] (1,0) -- (0,1);
\draw[black,thick] (3,0) -- (3,1);
\draw[black,thick] (0,3) -- (1,3);
\end{tikzpicture}
}
\\
(0,3)&(1,0)\\ 
(0,1)&(3,0)\\ 
(3,1)&(1,3)\\ 
\end{tabular}
&
\begin{tabular}{@{}ll@{}}
\multicolumn{2}{@{}l@{}}{
\begin{tikzpicture}[scale=0.4]
\foreach \x in {0,...,4}
  \foreach \y in {0,...,3}
    \fill[black!60] (\x,\y) circle (2pt);
\draw[black,->] (0,0) -- (5,0);
\draw[black,->] (0,0) -- (0,4);
\fill (0,3) circle (2pt);
\fill (0,2) circle (2pt);
\fill (3,0) circle (2pt);
\fill (1,1) circle (2pt);
\fill (4,0) circle (2pt);
\fill (1,3) circle (2pt);
\draw[black,thick] (0,3) -- (0,2);
\draw[black,thick] (3,0) -- (4,0);
\draw[black,thick] (4,0) -- (1,3);
\draw[black,thick] (0,2) -- (1,1);
\draw[black,thick] (3,0) -- (1,1);
\draw[black,thick] (0,3) -- (1,3);
\end{tikzpicture}
}
\\
(0,3)&(0,2)\\ 
(3,0)&(1,1)\\ 
(4,0)&(1,3)\\ 
\end{tabular}
& 

\multicolumn{2}{l}{
\begin{tabular}{@{}ll@{}ll@{}}
\multicolumn{2}{@{}l@{}}{
\begin{tikzpicture}[scale=0.4]
\foreach \x in {0,...,4}
  \foreach \y in {0,...,3}
    \fill[black!60] (\x,\y) circle (2pt);
\draw[black,->] (0,0) -- (5,0);
\draw[black,->] (0,0) -- (0,4);
\fill (4,1) circle (2pt);
\fill (0,3) circle (2pt);
\fill (0,2) circle (2pt);
\fill (3,0) circle (2pt);
\fill (1,1) circle (2pt);
\fill (4,0) circle (2pt);
\fill (1,3) circle (2pt);
\fill (3,2) circle (2pt);
\draw[black,thick] (0,3) -- (0,2);
\draw[black,thick] (3,0) -- (4,0);
\draw[black,thick] (4,1) -- (3,2);
\draw[black,thick] (0,2) -- (1,1);
\draw[black,thick] (3,0) -- (1,1);
\draw[black,thick] (4,1) -- (4,0);
\draw[black,thick] (0,3) -- (1,3);
\draw[black,thick] (1,3) -- (3,2);
\end{tikzpicture}
}
\\
(4,1)&(0,3)&(0,2)\\
(3,0)&(1,1)&(4,0)\\ 
(1,3)&(3,2)\\ 
\end{tabular}
}\\ 
\end{tabular}
\bigskip

There are also $25$ Hirzebruch quadrangles, omitted from the list: 
\[
Q_{a,b}:=\conv\{(0,0), (0,a), (1,0), (1,b)\},
\]
for all $a,b\ge 1$ with $a+b\le 10$:
\[
(a,b)\in \left\{
\begin{tabular}{c}
(1,1), (1,2), (1,3), (1,4), (1,5), (1,6), (1,7), (1,8), (1,9)\\
(2,2), (2,3), (2,4), (2,5), (2,6), (2,7), (2,8)\\
(3,3), (3,4), (3,5), (3,6), (3,7),
(4,4), (4,5), (4,6), 
(5,5)\\
\end{tabular}
\right\}.
\]

\vfill

\section*{List of Smooth 3-Polytopes with $\leq 12$ Lattice Points}

\begin{tabular}{llllll}

\begin{tabular}{@{}ll@{}}
\multicolumn{2}{@{}l@{}}{
\begin{tikzpicture}[scale=0.5]
\foreach \x in {0,...,1}
  \foreach \y in {0,...,1} {
\fill[black!30] (\x+0.6,\y+0.2) circle (2pt);
}
\foreach \x in {0,...,1}
  \foreach \y in {0,...,1}
    \fill[black!60] (\x,\y) circle (2pt);
\draw[black,->] (0,0) -- (2,0);
\draw[black,->] (0,0) -- (0,2);
\draw[black,->] (0,0) -- (6/5,2/5);
\fill (3/5,1/5) circle (2pt);
\fill (0,1) circle (2pt);
\fill (1,0) circle (2pt);
\fill (0,0) circle (2pt);
\draw[black,thick] (3/5,1/5) -- (0,1);
\draw[black,thick] (3/5,1/5) -- (1,0);
\draw[black,thick] (3/5,1/5) -- (0,0);
\draw[black,thick] (0,1) -- (1,0);
\draw[black,thick] (0,1) -- (0,0);
\draw[black,thick] (1,0) -- (0,0);
\end{tikzpicture}
}
\\
(0,0,1)&(0,1,0)\\ 
(1,0,0)&(0,0,0)\\ 
\end{tabular}
& 
\begin{tabular}{@{}ll@{}}
\multicolumn{2}{@{}l@{}}{
\begin{tikzpicture}[scale=0.5]
\foreach \x in {0,...,2}
  \foreach \y in {0,...,2} {
\fill[black!20] (\x+1.2,\y+0.4) circle (2pt);
\fill[black!40] (\x+0.6,\y+0.2) circle (2pt);
}
\foreach \x in {0,...,2}
  \foreach \y in {0,...,2}
    \fill[black!60] (\x,\y) circle (2pt);
\draw[black,->] (0,0) -- (3,0);
\draw[black,->] (0,0) -- (0,3);
\draw[black,->] (0,0) -- (9/5,3/5);
\fill (0,0) circle (2pt);
\fill (2,0) circle (2pt);
\fill (0,2) circle (2pt);
\fill (6/5,2/5) circle (2pt);
\draw[black,thick] (0,0) -- (2,0);
\draw[black,thick] (0,0) -- (0,2);
\draw[black,thick] (0,0) -- (6/5,2/5);
\draw[black,thick] (2,0) -- (0,2);
\draw[black,thick] (2,0) -- (6/5,2/5);
\draw[black,thick] (0,2) -- (6/5,2/5);
\end{tikzpicture}
}
\\
(0,0,0)&(2,0,0)\\ 
(0,2,0)&(0,0,2)\\ 
\end{tabular}
&

\begin{tabular}{@{}ll@{}}
\multicolumn{2}{@{}l@{}}{
\begin{tikzpicture}[scale=0.5]
\foreach \x in {0,...,2}
  \foreach \y in {0,...,2} {
\fill[black!30] (\x+0.6,\y+0.2) circle (2pt);
}
\foreach \x in {0,...,2}
  \foreach \y in {0,...,2}
    \fill[black!60] (\x,\y) circle (2pt);
\draw[black,->] (0,0) -- (3,0);
\draw[black,->] (0,0) -- (0,3);
\draw[black,->] (0,0) -- (6/5,2/5);
\fill (0,0) circle (2pt);
\fill (3/5,1/5) circle (2pt);
\fill (2,0) circle (2pt);
\fill (0,2) circle (2pt);
\fill (13/5,1/5) circle (2pt);
\fill (3/5,11/5) circle (2pt);
\draw[black,thick] (0,0) -- (3/5,1/5);
\draw[black,thick] (0,0) -- (0,2);
\draw[black,thick] (3/5,1/5) -- (3/5,11/5);
\draw[black,thick] (0,2) -- (3/5,11/5);
\draw[black,thick] (0,0) -- (2,0);
\draw[black,thick] (3/5,1/5) -- (13/5,1/5);
\draw[black,thick] (2,0) -- (13/5,1/5);
\draw[black,thick] (2,0) -- (0,2);
\draw[black,thick] (13/5,1/5) -- (3/5,11/5);
\end{tikzpicture}
}
\\
(0,0,0)&(0,0,1)\\ 
(2,0,0)&(0,2,0)\\ 
(2,0,1)&(0,2,1)\\ 
\end{tabular}
&

\begin{tabular}{@{}ll@{}}
\multicolumn{2}{@{}l@{}}{
\begin{tikzpicture}[scale=0.5]
\foreach \x in {0,...,2}
  \foreach \y in {0,...,2} {
\fill[black!30] (\x+0.6,\y+0.2) circle (2pt);
}
\foreach \x in {0,...,2}
  \foreach \y in {0,...,2}
    \fill[black!60] (\x,\y) circle (2pt);
\draw[black,->] (0,0) -- (3,0);
\draw[black,->] (0,0) -- (0,3);
\draw[black,->] (0,0) -- (6/5,2/5);
\fill (0,0) circle (2pt);
\fill (3/5,1/5) circle (2pt);
\fill (2,0) circle (2pt);
\fill (0,2) circle (2pt);
\fill (8/5,1/5) circle (2pt);
\fill (3/5,6/5) circle (2pt);
\draw[black,thick] (0,0) -- (3/5,1/5);
\draw[black,thick] (0,0) -- (0,2);
\draw[black,thick] (3/5,1/5) -- (3/5,6/5);
\draw[black,thick] (0,2) -- (3/5,6/5);
\draw[black,thick] (0,0) -- (2,0);
\draw[black,thick] (3/5,1/5) -- (8/5,1/5);
\draw[black,thick] (2,0) -- (8/5,1/5);
\draw[black,thick] (2,0) -- (0,2);
\draw[black,thick] (8/5,1/5) -- (3/5,6/5);
\end{tikzpicture}
}
\\
(0,0,0)&(0,0,1)\\ 
(2,0,0)&(0,2,0)\\ 
(1,0,1)&(0,1,1)\\ 
\end{tabular}
& 
\begin{tabular}{@{}ll@{}}
\multicolumn{2}{@{}l@{}}{
\begin{tikzpicture}[scale=0.5]
\foreach \x in {0,...,3}
  \foreach \y in {0,...,1} {
\fill[black!30] (\x+0.6,\y+0.2) circle (2pt);
}
\foreach \x in {0,...,3}
  \foreach \y in {0,...,1}
    \fill[black!60] (\x,\y) circle (2pt);
\draw[black,->] (0,0) -- (4,0);
\draw[black,->] (0,0) -- (0,2);
\draw[black,->] (0,0) -- (6/5,2/5);
\fill (0,1) circle (2pt);
\fill (0,0) circle (2pt);
\fill (3/5,1/5) circle (2pt);
\fill (3/5,6/5) circle (2pt);
\fill (3,0) circle (2pt);
\fill (3,1) circle (2pt);
\fill (8/5,1/5) circle (2pt);
\fill (8/5,6/5) circle (2pt);
\draw[black,thick] (0,0) -- (3/5,1/5);
\draw[black,thick] (0,1) -- (0,0);
\draw[black,thick] (3/5,1/5) -- (3/5,6/5);
\draw[black,thick] (0,1) -- (3/5,6/5);
\draw[black,thick] (0,0) -- (3,0);
\draw[black,thick] (3/5,1/5) -- (8/5,1/5);
\draw[black,thick] (3,0) -- (8/5,1/5);
\draw[black,thick] (0,1) -- (3,1);
\draw[black,thick] (3,0) -- (3,1);
\draw[black,thick] (3/5,6/5) -- (8/5,6/5);
\draw[black,thick] (8/5,1/5) -- (8/5,6/5);
\draw[black,thick] (3,1) -- (8/5,6/5);
\end{tikzpicture}
}
\\
(0,1,0)&(0,0,0)\\ 
(0,0,1)&(0,1,1)\\ 
(3,0,0)&(3,1,0)\\ 
(1,0,1)&(1,1,1)\\ 
\end{tabular}
\\ 
\begin{tabular}{@{}ll@{}}
\multicolumn{2}{@{}l@{}}{
\begin{tikzpicture}[scale=0.5]
\foreach \x in {0,...,3}
  \foreach \y in {0,...,1} {
\fill[black!30] (\x+0.6,\y+0.2) circle (2pt);
}
\foreach \x in {0,...,3}
  \foreach \y in {0,...,1}
    \fill[black!60] (\x,\y) circle (2pt);
\draw[black,->] (0,0) -- (4,0);
\draw[black,->] (0,0) -- (0,2);
\draw[black,->] (0,0) -- (6/5,2/5);
\fill (0,1) circle (2pt);
\fill (0,0) circle (2pt);
\fill (3/5,1/5) circle (2pt);
\fill (3/5,6/5) circle (2pt);
\fill (2,0) circle (2pt);
\fill (3,1) circle (2pt);
\fill (8/5,1/5) circle (2pt);
\fill (13/5,6/5) circle (2pt);
\draw[black,thick] (0,0) -- (3/5,1/5);
\draw[black,thick] (0,1) -- (0,0);
\draw[black,thick] (3/5,1/5) -- (3/5,6/5);
\draw[black,thick] (0,1) -- (3/5,6/5);
\draw[black,thick] (0,0) -- (2,0);
\draw[black,thick] (3/5,1/5) -- (8/5,1/5);
\draw[black,thick] (2,0) -- (8/5,1/5);
\draw[black,thick] (0,1) -- (3,1);
\draw[black,thick] (2,0) -- (3,1);
\draw[black,thick] (3/5,6/5) -- (13/5,6/5);
\draw[black,thick] (8/5,1/5) -- (13/5,6/5);
\draw[black,thick] (3,1) -- (13/5,6/5);
\end{tikzpicture}
}
\\
(0,1,0)&(0,0,0)\\ 
(0,0,1)&(0,1,1)\\ 
(2,0,0)&(3,1,0)\\ 
(1,0,1)&(2,1,1)\\ 
\end{tabular}
& 
\begin{tabular}{@{}ll@{}}
\multicolumn{2}{@{}l@{}}{
\begin{tikzpicture}[scale=0.5]
\foreach \x in {0,...,2}
  \foreach \y in {0,...,1} {
\fill[black!30] (\x+0.6,\y+0.2) circle (2pt);
}
\foreach \x in {0,...,2}
  \foreach \y in {0,...,1}
    \fill[black!60] (\x,\y) circle (2pt);
\draw[black,->] (0,0) -- (3,0);
\draw[black,->] (0,0) -- (0,2);
\draw[black,->] (0,0) -- (6/5,2/5);
\fill (0,1) circle (2pt);
\fill (0,0) circle (2pt);
\fill (3/5,1/5) circle (2pt);
\fill (3/5,6/5) circle (2pt);
\fill (2,0) circle (2pt);
\fill (2,1) circle (2pt);
\fill (8/5,1/5) circle (2pt);
\fill (8/5,6/5) circle (2pt);
\draw[black,thick] (0,0) -- (3/5,1/5);
\draw[black,thick] (0,1) -- (0,0);
\draw[black,thick] (3/5,1/5) -- (3/5,6/5);
\draw[black,thick] (0,1) -- (3/5,6/5);
\draw[black,thick] (0,0) -- (2,0);
\draw[black,thick] (3/5,1/5) -- (8/5,1/5);
\draw[black,thick] (2,0) -- (8/5,1/5);
\draw[black,thick] (0,1) -- (2,1);
\draw[black,thick] (2,0) -- (2,1);
\draw[black,thick] (3/5,6/5) -- (8/5,6/5);
\draw[black,thick] (8/5,1/5) -- (8/5,6/5);
\draw[black,thick] (2,1) -- (8/5,6/5);
\end{tikzpicture}
}
\\
(0,1,0)&(0,0,0)\\ 
(0,0,1)&(0,1,1)\\ 
(2,0,0)&(2,1,0)\\ 
(1,0,1)&(1,1,1)\\ 
\end{tabular}
& 
\begin{tabular}{@{}ll@{}}
\multicolumn{2}{@{}l@{}}{
\begin{tikzpicture}[scale=0.5]
\foreach \x in {0,...,2}
  \foreach \y in {0,...,2} {
\fill[black!30] (\x+0.6,\y+0.2) circle (2pt);
}
\foreach \x in {0,...,2}
  \foreach \y in {0,...,2}
    \fill[black!60] (\x,\y) circle (2pt);
\draw[black,->] (0,0) -- (3,0);
\draw[black,->] (0,0) -- (0,3);
\draw[black,->] (0,0) -- (6/5,2/5);
\fill (0,1) circle (2pt);
\fill (0,0) circle (2pt);
\fill (3/5,1/5) circle (2pt);
\fill (3/5,11/5) circle (2pt);
\fill (2,0) circle (2pt);
\fill (2,1) circle (2pt);
\fill (8/5,1/5) circle (2pt);
\fill (8/5,11/5) circle (2pt);
\draw[black,thick] (0,0) -- (3/5,1/5);
\draw[black,thick] (0,1) -- (0,0);
\draw[black,thick] (3/5,1/5) -- (3/5,11/5);
\draw[black,thick] (0,1) -- (3/5,11/5);
\draw[black,thick] (0,0) -- (2,0);
\draw[black,thick] (3/5,1/5) -- (8/5,1/5);
\draw[black,thick] (2,0) -- (8/5,1/5);
\draw[black,thick] (0,1) -- (2,1);
\draw[black,thick] (2,0) -- (2,1);
\draw[black,thick] (3/5,11/5) -- (8/5,11/5);
\draw[black,thick] (8/5,1/5) -- (8/5,11/5);
\draw[black,thick] (2,1) -- (8/5,11/5);
\end{tikzpicture}
}
\\
(0,1,0)&(0,0,0)\\ 
(0,0,1)&(0,2,1)\\ 
(2,0,0)&(2,1,0)\\ 
(1,0,1)&(1,2,1)\\ 
\end{tabular}
& 
\begin{tabular}{@{}ll@{}}
\multicolumn{2}{@{}l@{}}{
\begin{tikzpicture}[scale=0.5]
\foreach \x in {0,...,1}
  \foreach \y in {0,...,1} {
\fill[black!30] (\x+0.6,\y+0.2) circle (2pt);
}
\foreach \x in {0,...,1}
  \foreach \y in {0,...,1}
    \fill[black!60] (\x,\y) circle (2pt);
\draw[black,->] (0,0) -- (2,0);
\draw[black,->] (0,0) -- (0,2);
\draw[black,->] (0,0) -- (6/5,2/5);
\fill (1,1) circle (2pt);
\fill (0,1) circle (2pt);
\fill (0,0) circle (2pt);
\fill (1,0) circle (2pt);
\fill (3/5,1/5) circle (2pt);
\fill (8/5,1/5) circle (2pt);
\fill (3/5,6/5) circle (2pt);
\fill (8/5,6/5) circle (2pt);
\draw[black,thick] (0,0) -- (3/5,1/5);
\draw[black,thick] (0,1) -- (0,0);
\draw[black,thick] (3/5,1/5) -- (3/5,6/5);
\draw[black,thick] (0,1) -- (3/5,6/5);
\draw[black,thick] (0,0) -- (1,0);
\draw[black,thick] (3/5,1/5) -- (8/5,1/5);
\draw[black,thick] (1,0) -- (8/5,1/5);
\draw[black,thick] (1,1) -- (0,1);
\draw[black,thick] (1,1) -- (1,0);
\draw[black,thick] (3/5,6/5) -- (8/5,6/5);
\draw[black,thick] (8/5,1/5) -- (8/5,6/5);
\draw[black,thick] (1,1) -- (8/5,6/5);
\end{tikzpicture}
}
\\
(1,1,0)&(0,1,0)\\ 
(0,0,0)&(1,0,0)\\ 
(0,0,1)&(1,0,1)\\ 
(0,1,1)&(1,1,1)\\ 
\end{tabular}
& 
\begin{tabular}{@{}ll@{}}
\multicolumn{2}{@{}l@{}}{
\begin{tikzpicture}[scale=0.5]
\foreach \x in {0,...,2}
  \foreach \y in {0,...,1} {
\fill[black!30] (\x+0.6,\y+0.2) circle (2pt);
}
\foreach \x in {0,...,2}
  \foreach \y in {0,...,1}
    \fill[black!60] (\x,\y) circle (2pt);
\draw[black,->] (0,0) -- (3,0);
\draw[black,->] (0,0) -- (0,2);
\draw[black,->] (0,0) -- (6/5,2/5);
\fill (0,1) circle (2pt);
\fill (0,0) circle (2pt);
\fill (3/5,1/5) circle (2pt);
\fill (3/5,6/5) circle (2pt);
\fill (2,0) circle (2pt);
\fill (2,1) circle (2pt);
\fill (13/5,1/5) circle (2pt);
\fill (13/5,6/5) circle (2pt);
\draw[black,thick] (0,0) -- (3/5,1/5);
\draw[black,thick] (0,1) -- (0,0);
\draw[black,thick] (3/5,1/5) -- (3/5,6/5);
\draw[black,thick] (0,1) -- (3/5,6/5);
\draw[black,thick] (0,0) -- (2,0);
\draw[black,thick] (3/5,1/5) -- (13/5,1/5);
\draw[black,thick] (2,0) -- (13/5,1/5);
\draw[black,thick] (0,1) -- (2,1);
\draw[black,thick] (2,0) -- (2,1);
\draw[black,thick] (3/5,6/5) -- (13/5,6/5);
\draw[black,thick] (13/5,1/5) -- (13/5,6/5);
\draw[black,thick] (2,1) -- (13/5,6/5);
\end{tikzpicture}
}
\\
(0,1,0)&(0,0,0)\\ 
(0,0,1)&(0,1,1)\\ 
(2,0,0)&(2,1,0)\\ 
(2,0,1)&(2,1,1)\\ 
\end{tabular}
& 
\end{tabular}

\bigskip

There are also the following $23$ prisms, omitted from the list:
\[
Q_{a,b,c}:=\conv\{(0,0,0), (0,0,a), (1,0,0), (1,0,b), (0,1,0), (0,1,c)\},
\]
for all $a,b,c\ge 1$ with $a+b+c\le 9$:
\[
(a,b,c)\in \left\{
\begin{tabular}{c}
(1,1,1), (1,1,2), (1,1,3), (1,1,4), (1,1,5), (1,1,6), (1,1,7)\\
(1,2,2), (1,2,3), (1,2,4), (1,2,5), (1,2,6),
(1,3,3), (1,3,4), (1,3,5), (1,4,4)\\
(2,2,2), (2,2,3), (2,2,4), (2,2,5),
(2,3,3), (2,3,4), (3,3,3)\\
\end{tabular}
\right\}.
\]

\vfill
}

\end{document}